\theoremstyle{plain}
\newtheorem{thm}{Theorem}[section]
\newtheorem{prop}[thm]{Proposition}
\newtheorem{lem}[thm]{Lemma}
\theoremstyle{definition}
\newtheorem{exa}[thm]{Example}
\newtheorem{rem}[thm]{Remark}
\newtheorem{defn}[thm]{Definition}
\def\det{\mathop{\mathrm{det}}\nolimits}
\def\Ker{\mathop{\mathrm{Ker}}\nolimits}
\newcommand{\lra}{\longrightarrow}
\newcommand{\ra}{\rightarrow}
\newcommand{\Q}{{\Bbb Q}}
\newcommand{\Z}{{\Bbb Z}}
\newcommand{\pc}[2]{\mbox{$\begin{array}{c}
\includegraphics[scale=#2]{#1.eps}
\end{array}$}}
\begin{document}

\large
\begin{center}
{\bf\Large Twisted Alexander invariants of knot group representations}
\end{center}
\vskip 1.5pc
\begin{center}{Takefumi Nosaka\footnote{
E-mail address: {\tt nosaka@math.titech.ac.jp}
}}\end{center}
\vskip 1pc

\begin{abstract}\baselineskip=12pt \noindent
Given a homomorphism from a knot group to a fixed group, we introduce an element of a $K_1$-group, which is a generalization of (twisted) Alexander polynomials. We compare the $K_1$-class with other Alexander polynomials. In terms of semi-local rings, we compute the $K_1$-classes of some knots and show their non-triviality. We also introduce metabelian  Alexander polynomials.
\end{abstract}
\begin{center}
\normalsize
\baselineskip=17pt
{\bf Keywords} \\
\ \ \ knot, Alexander polynomial, $K_1$-group, Dieudonn\'{e} determinant, Novikov ring, semi-local ring \ \
\end{center}

\large
\baselineskip=16pt
\section{Introduction}
\label{IntroS}
The Alexander polynomial of a knot $K$ in the 3-sphere $S^3$ is a landmark topic in 3-dimensional topology that has provided various topological applications; see, e.g., \cite{Lic}. The classical polynomial is, roughly speaking, a homological study of the abelianization of the fundamental group $\pi_1(S^3 \setminus K) $. Since the beginning of the 21st century, the following generalizations of the Alexander polynomial have been suggested from noncommutative viewpoints: first, the twisted Alexander polynomial \cite{Lin,Wada} can be defined from a representation $ \pi_1(S^3 \setminus K) \ra GL_n(R)$ over a commutative ring $R$. This definition has also provided applications in topology.  Since the twisted polynomial is defined as a determinant, it can be calculated using the Fox derivative; see \cite{Wada,FV}. Second, Cochran \cite{C} and Harvey \cite{Har} define $\Delta_h$ from a locally indicable and amenable group; $\Delta_h$ is called the higher-order Alexander polynomial and lies in a non-commutative ring up to some indeterminacy. However, the coefficients of the polynomial lie in a skew fractional field, making it difficult to quantitatively investigate the polynomial. There are few instances in which the polynomial $\Delta_h$ with non-triviality  has been computed; see, e.g., \cite{Har,GS,Horr}. In contrast, there are a number of studies on the degree of $\Delta_h$; see, e.g., \cite{C,Har,Har3,Horr,Kitayama}.

In this paper, starting from any group $G$ and any homomorphism $\rho: \pi_1(S^3 \setminus K) \ra G $ with a choice of a meridian $\mathfrak{m} \in \pi_1(S^3 \setminus K)$, we introduce a $K_1$-class, $\Delta_{\rho}^{K_1}$, which lies in a quotient group $\mathcal{Q}_{\mathcal{A}, \kappa}$ of a $K_1$-group; see Example \ref{exppp0} and Definition \ref{def11}. The $K$-groups have provided uniform understanding of several mathematical phenomena; see, e.g., \cite{Wei}. For example, some Reidemeister torsions and $L^2$-torsions can be summarized as elements of some $K_1$-groups; see \cite{FL,Kie,Mil,Tur}; the paper \cite{GN} defines some invariants of fibered spaces, which lie in $K_1$-groups of some Novikov rings. 
Similarly, we will see (as in Example \ref{exa1124}) that the above twisted polynomials are recovered from the $K_1$-class. A relation to the higher order polynomial will be studied in a forthcoming paper \cite{Nos}.

In general, it is difficult to compute a $K_1$-group and show the non-triviality of an element in the $K_1$-group. Fortunately, the quotient group $\mathcal{Q}_{\mathcal{A}, \kappa}$ has been previously studied in \cite{PR,P}. Using this study as our motivation, we suggest two maps from $\mathcal{Q}_{\mathcal{A}, \kappa}$ in Section \ref{QSS}: the first is due to a logarithm developed by Pajitnov \cite{P}, and the second is a composite map of a ring homomorphism $\Upsilon$ and Morita equivalence. In Section \ref{ExaSS}, we compute the $K_1$-classes $\Delta_{\rho}^{K_1}$'s of some knots and show their non-triviality using the two maps. As a corollary of this study, we suggest two criterions for detecting fiberedness of knots (see Theorems \ref{thm119} and \ref{ww2}).

In Section \ref{invdefS2}, we consider the case $G \cong H \rtimes \Z$ for some finite group $H$ and show that the pushforward of $\Delta_{\rho}^{K_1} $ by $\Upsilon $ is independent of the choice of $ \mathfrak{m}$ (see Proposition \ref{ww3}). For a special metabelian case, we define a {\it metabelian Alexander polynomial}, $\Delta_{ \rho^{\rm meta}_{ N} }^{K_1}$, as a knot invariant (Definition \ref{def133}), which is a generalization of the twisted Alexander polynomial in \cite{HKL}. The coefficients of $\Delta_{ \rho^{\rm meta}_{ N} }^{K_1}$ appear in commutative polynomials, enabling us to obtain a computable knot invariant. We will investigate properties of $\Delta_{\rho}^{K_1}$ and the metabelian polynomial in a forthcoming paper \cite{Nos}.

\

\noindent
{\textbf{Conventional notation.} For a group $G$ and a commutative ring $A$, we denote the group ring by $A [G]$ over $A$ and the abelianization by $G_{\rm ab}$. Every non-commutative ring $R$ always has 1; $R^{\times}$ refers to the multiplicative group consisting of units in $R$, and $\Sigma_{g,1}$ denotes the connected oriented compact surface of genus $g$ with circle boundary.

\subsection{Acknowledgments}
The author expresses his gratitude to Professors J. Scott Carter, Hiroshi Goda, Takahiro Kitayama, and Andrei Pajitnov for valuable discussions and comments.
He is also grateful to Professors 
Dawid Kielak and Andrew Nicas for telling him some references. 

\section{Review: regular Seifert surfaces and regular Heegaard splitting }
\label{ganeaS}
We start by reviewing terminology of regular Seifert surfaces and spines in \cite{Lin}. We suppose basic knowledge of knot theory as in \cite[\S\S 1--10]{Lic}. A {\it knot} in $S^3$ is a tame embedding of $S^1$ into $S^3$. Let $K$ be an oriented knot in $S^3$. A Seifert surface of $K$ is a surface $F$ embedded in $S^3$ such that $\partial F= K$ and $F$ is homeomorphic to $\Sigma_{g,1}$ for some $g \in \Z$. A {\it spine} of such an $F$ is a bouquet of circles $W \subset F$ such that it is a deformation retract of $F$. A Seifert surface $F$ is said to be {\it regular} if it has a spine $W$ whose embedding in $S^3$ induced by $F \subset S^3$ is isotopic to the standard embedding $W \subset F$. In other words, the regularity refers to a choice of generator $u_1,\dots, u_{2g}$ of $\pi_1(F)$.  Given a knot diagram, any Seifert surface obtained by the Seifert algorithm is considered regular; see \cite{Lin}. A knot $K$ is {\it fibered} if there is a fibration $\Sigma_{g,1} \stackrel{\iota}{\ra} S^3 \setminus K \ra S^1 $.

\begin{figure}[htpb]
\begin{center}
\begin{picture}(-60,60)
\put(-141,12){\pc{spine1}{0.533104}}
\put(-128,31){\large $F$}
\put(-5,16){\large $W $}
\put(57,31){\large $u_1 $}
\put(57,-5){\large $u_2 $}
\end{picture}
\end{center}
\caption{\label{ftft66} A regular Seifert surface and a regular spine. }
\end{figure}
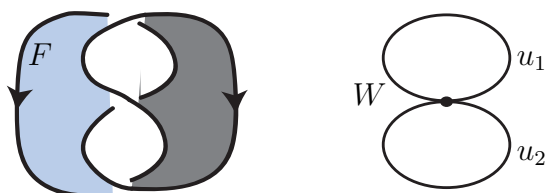
According to \cite{Tro}, we should observe the complement $S^3 \setminus N(F)$ in which $N(F)$ is an open neighborhood of $F$ in $S^3$. Let $g \in \mathbb{N}$ be the genus of the surface $F$. We can easily see that $S^3 \setminus N(F)$ is homeomorphic to a handlebody of genus $ 2g $. Let us fix generators $x_1, \dots, x_{2g}$ of $\pi_1(S^3 \setminus N(F) ) $ and a $1$-handle $\mathcal{H}\subset S^3 \setminus L $ which represents a meridian $\mathfrak{m} $ of $K$. Consider the following subsets, as in Figure \ref{ftft66}:
$$ U= \mathcal{H}\cup N(F), \ \ \ \ \ \ \ \ \ \ V=S^3 \setminus F .$$
Notice that $ U, V$, and $U \cap V $ are homeomorphic to the handlebodies of genus $2g+1, 2g,$ and $4g$, respectively. Additionally, $U\cap V$ is $U \setminus F$ and consists of one layer above $F$ and one below joined by a bridge in $S^3 \setminus F$. Then, $\pi_1(U \cap V) $ has a canonical generating set $\{ u_1^{\sharp},\dots, u_{2g}^{\sharp}, u_1^{\flat }, \dots,u_{2g}^{\flat } \}$, where $u_{i}^{\sharp} $ is represented by a loop running parallel to $u_i$ in the upper layer, and $u_{i}^{\flat } $ is represented by a loop that runs parallel to $u_i$ in the lower layer. Lastly, $\{u_1^{\flat }, \dots,u_{2g}^{\flat } , \mathfrak{m} \}$ and $\{ u_1^{\sharp},\dots, u_{2g}^{\sharp} \}$ can be considered generators of $\pi_1( U)$ and $\pi_1( V) $, respectively.

We will give a presentation below \eqref{jjj}. Consider the inclusions $ i:U \cap V \hookrightarrow U$ and $i': U \cap V \hookrightarrow V$. The van Kampen theorem gives the following presentation of $\pi_1(S^3 \setminus K): $
\[ \langle \mathfrak{m} , x_1, x_2, \dots, x_{2g},u_1^{\flat }, \dots,u_{2g}^{\flat } \ | \ i_*(u_j^{\sharp})i_*'(u_j^{\sharp} )^{-1} , \ \ i_*(u_j^{\flat })i_*'(u_j^{\flat })^{-1}\ \ \ (1\leq j \leq 2g) \ \rangle \]
where $\mathfrak{m} $ is represented by the meridian of $K$. Since $i_* (u_j^{\flat } )$ can be written as a word of $x_1, \dots, x_{2g}$, the relator $u_j^{\flat }i_*'(u_j^{\flat } )^{-1}$ annihilates $ u_j^{\flat }= i_*' (u_j^{\flat })$. In addition, note that when $i_*(u_j^{\sharp})= \mathfrak{m} u_j^{\flat } \mathfrak{m}^{-1}$, the relator $i_*(u_j^{\sharp})i_*'(u_j^{\sharp})^{-1}$ turns out to be $ \mathfrak{m} \cdot i_*( u_j^{\flat})\cdot \mathfrak{m}^{-1}\cdot ( i_*( u_j^{\sharp}))^{-1} $. We denote $ i_*( u_j^{\flat})$ by $y_j$, and $ i_*( u_j^{\sharp} )$ by $z_j$ (where $y_j,z_j$'s are some words of $x_1, \dots, x_{2g}$).  Now, we conclude the presentation:
\begin{equation}\label{jjj} \pi_1(S^3 \setminus K) \cong \langle x_1, \dots , x_{2g}, \mathfrak{m} \ | \ \mathfrak{m} y_i \mathfrak{m}^{-1}= z_i \ \ \ \ i \in \{ 1, \dots , 2g\} \rangle .\end{equation}
This presentation is a folklore and appears in, e.g., \cite{C,Lin,Tro}.
\begin{figure}[htpb]
\begin{center}
\begin{picture}(-60,90)
\put(-231,32){\pc{spine12}{0.430104}}
\put(-27,42){\pc{monodoromies13}{0.330104}}
\put(-231,31){\large $U$}
\put(-138,32){\large $V$}
\put(-236,4){\large $F$}
\put(-141,-1){\large $K$}

\put(-11,4){\large $u_1^{\flat}$}
\put(44,4){\large $u_2^{\flat}$}
\put(114,4){\large $u_{2g}^{\flat}$}

\put(-11,39){\large $u_1$}
\put(44,39){\large $u_2$}
\put(114,39){\large $u_{2g}$}
\put(-35.2,39){\large $F$}
\put(174,39){\large $\mathcal{H}$}

\put(-11,73){\large $u_1^{\sharp}$}
\put(44,73){\large $u_2^{\sharp}$}
\put(114,73){\large $u_{2g}^{\sharp}$}

\put(-261,73){\large (a)}
\put(-64,73){\large (b)}
\end{picture}
\

\end{center}
\caption{\label{ftft} (a) $U$, $V$, and $F$. (b) representative generators $\{ u_1^{\sharp},\dots, u_{2g}^{\sharp}\} $ and $\{u_1^{\flat }, \dots,u_{2g}^{\flat }\} $ in $U$. }
\end{figure}
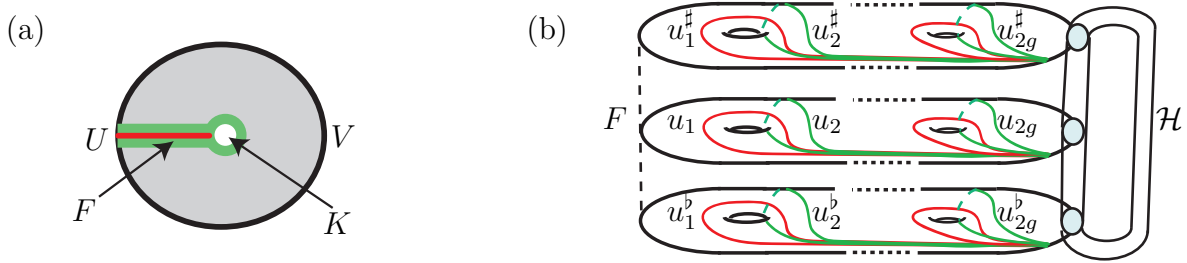

\section{Definition of the $K_1$-Alexander invariant}
\label{defS2}
This section introduces $K_1$-classes as a generalization of the Alexander polynomial and states Theorems \ref{thm11} and \ref{thm119}. The proofs of the theorems will appear in \S \ref{appeSS}.

We begin by defining algebraic terminologies. Let $\mathcal{A}$ be a ring with a unit, which may be non-commutative, and take a ring isomorphism $\kappa : \mathcal{A} \ra \mathcal{A} $. Then, we have the completed skew Laurent polynomial ring $\mathcal{A}_{\kappa}( \!( \tau) \!) $. More precisely, $\mathcal{A}_{\kappa}( \!( \tau) \!) $ is the set of formal power series $ \sum_{i= - N }^{\infty}a_i \tau^i $ where $a_i \in \mathcal{A}$ and $ \tau^n a = \kappa^n (a) \tau^n $. In other words, $\mathcal{A}_{\kappa}( \!( \tau) \!) $ is equal to $\mathcal{A}[ \![ \tau] \!][\tau^{-1}] $, which is called the Novikov ring in \cite{PR}. The reasons we consider Novikov rings instead of usual polynomial rings are as follows: first, the invertibility of matrices  over a Novikov ring is more common than those over polynomial rings (cf. studies of $K_1$; see \cite{PR}). Second, $1- \tau $ is invertible $\mathcal{A}_{\kappa}( \!( \tau) \!) $, which is an important property of knot theory. In addition, the semi-locality is suitable for $\mathcal{A}_{\kappa}( \!( \tau) \!) $, as seen in \S \ref{invdefS2}.

The following is assumed throughout this paper:

\vskip 0.61pc

\noindent
\underline{\bf Assumption $(\dagger)$}
Let $\mathcal{A} $ be a ring and $\kappa :\mathcal{A} \ra \mathcal{A}$ be 
a ring isomorphism. 
We fix a meridian $ \mathfrak{m} \in \pi_1(S^3 \setminus K) $ and suppose a ring homomorphism $\rho: \Z[ \pi_1(S^3 \setminus K)] \ra \mathcal{A}_{\kappa}( \!( \tau) \!) $ satisfying $\rho (\mathfrak{m} )= \tau $ and $ \rho (x_i) \in \mathcal{A} $, where $\mathfrak{m}, x_i$ are the generators in \eqref{jjj}.

\begin{exa}\label{exppp0}
We will see that any group homomorphism $ \pi_1(S^3 \setminus K) \ra G$ yields a situation satisfying $(\dagger)$ as follows.

Given a semi-direct product $H \rtimes \Z$ and a group homomorphism $h: \pi_1(S^3 \setminus K) \ra H \rtimes \Z$ such that $h ( \mathfrak{m}) =(1_G,0) $ and $h (x_i) \in H \times \{ 0\}$, we have the following situation with $(\dagger)$. Let $\mathcal{A} $ be the group ring $B[H]$ over a commutative ring $B$. If we replace $ h(\mathfrak{m})$ with $ \tau $, we can define $\mathcal{A}_{\kappa}( \!( \tau) \!) $. Then, $h$ canonically gives rise to $ \rho : \Z[ \pi_1(S^3 \setminus K)] \ra \mathcal{A}_{\kappa}( \!( \tau) \!) $, satisfying $(\dagger).$

Generally, we obtain such an $h$ from any group $G$ and any group homomorphism $f: \pi_1(S^3 \setminus K) \ra G $ as follows: let $ H$ be the subgroup generated by $f(x_i) $'s and let $\Z = \{ \tau^n \}_{n \in \Z} $ act on $H$ by $ g \cdot \tau^n:= f(\mathfrak{m} )^{n} g f(\mathfrak{m})^{-n} $. From this action, we can obtain the semi-direct product $H \rtimes \Z $ and can then define a homomorphism $ h: \pi_1(S^3 \setminus K) \ra H \rtimes \Z $ by $ h(x_i )= (f(x_i) ,0)$ and $ h( \mathfrak{m})=(1_H ,1)$.
\end{exa}

Let us set up $K_1$-groups. For a ring $R$ with unit, let $GL_n(R)$ be the general linear group over $R$ of size $n$. Since $ GL_n(R)$ canonically injects into $GL_{n+1}(R) $, we have the colimit $GL(R) = \lim GL_n(R) $. The {\it $K_1$-group}, $K_1(R)$, is defined to be the abelianization $ GL(R)/[GL(R),GL(R)]$. We often view elements of $GL_n(R)$ as those of $K_1(R)$. 
The inclusion $\mathcal{A} \hookrightarrow \mathcal{A}_{\kappa}( \!( \tau) \!) $ induces $ K_1(\mathcal{A})\ra K_1( \mathcal{A}_{\kappa}( \!( \tau) \!))$, which is known to be a splittable injection \cite{PR}. In this paper, we mainly consider the cokernel and denote it by $\mathcal{Q}_{\mathcal{A}, \kappa}$; that is,
\begin{equation}\label{oo}\mathcal{Q}_{\mathcal{A}, \kappa}:= K_1( \mathcal{A}_{\kappa}( \!( \tau) \!))/ K_1(\mathcal{A}). \end{equation}

Next, let us review the Fox derivative. Let $ \mathcal{F}$ be the free group of rank $2g$. Given a basis, $x_1,\dots, x_{2g}$, of $ \mathcal{F}$, we define a $\Z$-linear map $\frac{\partial \ }{\partial x_i} : \Z [ \mathcal{F}] \ra \Z[ \mathcal{F}]$ by the following identities:
$$\frac{\partial x_j }{\partial x_i}= \delta_{ij}, \ \ \ \ \frac{\partial (hk) }{\partial x_i}= \frac{\partial h }{\partial x_i}+ h \frac{\partial k}{\partial x_i} \ \ \ \ \ \ \ (h,k \in \mathcal{F}) . $$

Let us consider the case in which $ \mathcal{F}$ equals $ \langle x_1, \dots, x_{2g} | \ \rangle $, as in \S \ref{ganeaS} to explain Definition \ref{def1156} below. Recalling the presentation of $\pi_1(S^3 \setminus K) $ in \eqref{jjj}, let us consider the Jacobian matrix of the relation \eqref{jjj}. Namely, 
\begin{equation}\label{oo7} \Bigl\{ \tau \rho ( \frac{\partial y_j }{\partial x_i} )- \rho ( \frac{\partial z_j }{\partial x_i} ) \Bigr\}_{ 1 \leq i,j \leq 2g} \in \mathrm{Mat}(2g \times 2g, \mathcal{A}_{\kappa}( \!( \tau) \!)). \end{equation}
Let $A_{F,W}$ denote this matrix, which was introduced by \cite{Tro,Lin}. Since $ \mathcal{A}_{\kappa}( \!( \tau) \!) $ can be considered a local coefficient of $S^3 \setminus K$, we can define the 1-st homology $H_1( S^3 \setminus K ; \mathcal{A}_{\kappa}( \!( \tau) \!))$ of the local system. As stated in \cite[Proposition 6.1]{C}, it is known\footnote{Strictly speaking, the paper \cite{C} assumed that $\mathcal{A}$ is a skew field obtained from a fraction of an Ore domain. Nevertheless, we can easily verify that the same proof runs for $\mathcal{A}_{\kappa}( \!( \tau) \!) $ with Assumption ($\dagger$).} that this $A_{F,W}$ gives a finite presentation of the 1-st homology $H_1( S^3 \setminus K ; \mathcal{A}_{\kappa}( \!( \tau) \!))$. There is an exact sequence
\begin{equation}\label{jjj5655} 0 \ra
( \mathcal{A}_{\kappa}( \!( \tau) \!))^{2g} \xrightarrow{ \ A_{F,W} \ } ( \mathcal{A}_{\kappa}( \!( \tau) \!))^{2g}\lra H_1( S^3 \setminus K ; \mathcal{A}_{\kappa}( \!( \tau) \!)) \lra 0.
\end{equation}

\begin{defn}\label{def1156}
If $A_{F,W}$ is an invertible matrix, we define the {\it $K_1$-Alexander invariant (with respect to $\rho$)} to be the $K_1$-class of $\tau^{-g} A_{F,W} $ in $ \mathcal{Q}_{\mathcal{A}, \kappa} $. More precisely,
$$ \Delta_{\rho}^{K_1} := [ \tau^{-g} A_{F,W} ] \in \mathcal{Q}_{\mathcal{A}, \kappa} . $$
On the other hand, if $A_{F,W}$ is not invertible, we define $\Delta_{\rho}^{K_1}$ to be zero.
\end{defn}
In Section \ref{InvHandle}, we will show the invariance: To be precise,
\begin{thm}[{cf. \cite[Theorem 3.3]{Lin}}]\label{thm11}
The class $\Delta_{\rho }^{K_1}$ in $\mathcal{Q}_{\mathcal{A}, \kappa} $ does not depend on the choice of the generator $x_1,\dots, x_{2g}$, the spine $W$, or the regular Seifert surface $F$. In other words, $\Delta_{\rho }^{K_1}$ depends only on the ring homomorphism $\rho:\Z[ \pi_1(S^3 \setminus K) ] \ra \mathcal{A}_{\kappa}( \!( \tau) \!) $ with Assumption ($\dagger $).
\end{thm}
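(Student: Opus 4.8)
The plan is to establish invariance of $\Delta_\rho^{K_1}$ under the three types of choices by reducing everything to the well-understood behaviour of presentation matrices for the module $H_1(S^3\setminus K;\mathcal{A}_\kappa(\!(\tau)\!))$, together with elementary moves on the presentation \eqref{jjj}. The key point is that the exact sequence \eqref{jjj5655} exhibits $A_{F,W}$ as a presentation matrix of a fixed $\mathcal{A}_\kappa(\!(\tau)\!)$-module; two finite presentations of the same module by square invertible matrices differ by stabilization, permutation, and left/right multiplication by elementary and invertible matrices over $\mathcal{A}$, so their $K_1$-classes agree \emph{after} passing to the quotient $\mathcal{Q}_{\mathcal{A},\kappa}=K_1(\mathcal{A}_\kappa(\!(\tau)\!))/K_1(\mathcal{A})$. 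Thus the only thing one really has to check is that each elementary move on the data $(F,W,\{x_i\})$ changes $A_{F,W}$ by exactly such an allowed transformation, and in particular that any "new" factors introduced along the way lie in the image of $K_1(\mathcal{A})$ (i.e. come from $\tau$-independent matrices over $\mathcal{A}$) or are powers of $\tau$ that are absorbed by the $\tau^{-g}$ normalization.

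First I would treat the change of generators $x_1,\dots,x_{2g}$ of $\pi_1(S^3\setminus N(F))$ with $F$, $W$ fixed. A Nielsen transformation of the $x_i$ replaces the $y_j,z_j$ by their images under a free-group automorphism; by the chain rule for the Fox derivative this multiplies $A_{F,W}$ on the \emph{right} by the Jacobian of that automorphism, which is a matrix over $\mathcal{A}$ (no $\tau$ appears, since $\rho(x_i)\in\mathcal{A}$ and the automorphism does not involve $\mathfrak m$), hence an element of $GL_{2g}(\mathcal{A})$; its class therefore dies in $\mathcal{Q}_{\mathcal{A},\kappa}$. Changing the generators $u_i$ of $\pi_1(F)$ (equivalently, a different regular spine $W$) acts by a symplectic-type change on the $u_j^\sharp,u_j^\flat$ and hence conjugates/multiplies $A_{F,W}$ on the \emph{left} by a matrix again built from the $\rho(x_i)\in\mathcal{A}$ alone; same conclusion. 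One must here check carefully that the relation $i_*(u_j^\sharp)=\mathfrak m\, i_*(u_j^\flat)\,\mathfrak m^{-1}$ is preserved, so that the single $\tau$ in \eqref{oo7} is exactly the one coming from the meridian and nothing else sneaks in.

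Next I would handle stabilization of the Seifert surface: any two regular Seifert surfaces for $K$ become isotopic after tubing (adding trivial handles), which enlarges $F$ from genus $g$ to $g+1$. This adds two generators $u_{2g+1},u_{2g+2}$ and correspondingly two generators $x_{2g+1},x_{2g+2}$, and — because the new handle is trivial — the new relators are of the form $\mathfrak m x_{2g+1}\mathfrak m^{-1}=x_{2g+1}$, $\mathfrak m x_{2g+2}\mathfrak m^{-1}=x_{2g+2}$ up to a change of basis. The corresponding $A_{F',W'}$ is, after the allowed moves, block-diagonal with the old $A_{F,W}$ in one block and $\bigl(\tau-1\bigr)I_2$ (or a unipotent variant) in the other. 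Since $1-\tau$ is a unit in $\mathcal{A}_\kappa(\!(\tau)\!)$ and since the genus shift turns $\tau^{-g}$ into $\tau^{-g-1}$, this extra factor is $[\tau^{-1}(\tau-1)I_2]=[-(1-\tau^{-1})I_2]$; one checks this class is trivial in $\mathcal{Q}_{\mathcal{A},\kappa}$ (it is a diagonal unit, and $-1\in\mathcal{A}^\times$, while $(1-\tau^{-1})$ is a commutator-type unit whose class vanishes — this is exactly the kind of normalization reason the $\tau^{-g}$ twist was inserted). Assembling these three reductions gives the theorem, once one invokes the stated fact (footnote, after \cite[Prop.~6.1]{C}) that \eqref{jjj5655} holds and that $K_1(\mathcal{A})\hookrightarrow K_1(\mathcal{A}_\kappa(\!(\tau)\!))$ is injective, so that "equal in the cokernel" is a meaningful and checkable condition.

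The main obstacle I anticipate is the bookkeeping in the surface-stabilization step: verifying that the new relators really are trivial handle relators in the \emph{correct} basis (so that the new block is genuinely $\pm(\tau-1)I_2$ up to $GL(\mathcal{A})$ and not something entangled with the old block), and then pinning down the precise unit $[\tau^{-1}(\tau-1)]$ and confirming it is $1$ in $\mathcal{Q}_{\mathcal{A},\kappa}$ — this is where the choice of the normalizing power $\tau^{-g}$ in Definition \ref{def1156} is forced, and getting the exponent and sign exactly right (rather than off by a unit) is the delicate part. The generator-change and spine-change steps, by contrast, are essentially the Fox-calculus chain rule plus the observation that everything happening "inside $\mathcal{A}$" is killed in the quotient, and should go through routinely once the presentation \eqref{jjj} is set up as in \S\ref{ganeaS}.
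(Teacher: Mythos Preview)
Your treatment of the generator change and the spine change is essentially the paper's argument: the Fox chain rule shows that $A_{F,W}$ is multiplied on one side by a Jacobian matrix with entries in $\mathcal{A}$, and such a factor dies in $\mathcal{Q}_{\mathcal{A},\kappa}$.

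The Seifert surface step, however, has a real gap. Two regular Seifert surfaces are \emph{not} related by adding ``trivial handles''; they are related by Lin's regular $S$-equivalence, i.e.\ a sequence of handle additions along arcs $\alpha\subset S^3$ with $\alpha\cap F=\partial\alpha$ (\cite[Theorem~1.7]{Lin}). For such a handle the new relators are not $\mathfrak m x_{2g+i}\mathfrak m^{-1}=x_{2g+i}$; Lin's Lemma~2.4 gives instead $\bar y_{2g+1}=vx_{2g+2}$, $\bar y_{2g+2}=1$, $\bar z_{2g+1}=w$, $\bar z_{2g+2}=x_{2g+1}$ for certain words $v,w$ in $x_1,\dots,x_{2g}$. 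The extra $2\times 2$ block of $A_{F',W'}$ is therefore (up to elementary operations)
\[
\begin{pmatrix} 0 & \tau\rho(v)\\ -1 & 0\end{pmatrix},
\]
contributing $\tau\rho(v)$ to $K_1$, not $(\tau-1)^2$. The extra $\tau$ is exactly absorbed by the shift $\tau^{-g}\to\tau^{-(g+1)}$, leaving $\rho(v)\in\mathcal{A}^\times$, which is trivial in $\mathcal{Q}_{\mathcal{A},\kappa}$.

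Your proposed block $(\tau-1)I_2$ would instead produce the factor $[\tau^{-1}(\tau-1)]^2$, and your parenthetical that $(1-\tau^{-1})$ is ``a commutator-type unit whose class vanishes'' is false: already for $\mathcal{A}=\mathbb{Q}$, $\kappa=\mathrm{id}$, the class of $1-\tau$ in $\mathcal{Q}_{\mathcal{A},\kappa}\cong\mathbb{Q}(\!(\tau)\!)^\times/\mathbb{Q}^\times$ is nonzero (indeed $\mathrm{Log}(1-\tau)\neq 0$). More conceptually, your opening appeal to ``two square presentations of the same module differ by elementary moves and units'' only controls the difference up to $K_1(\mathcal{A}_\kappa(\!(\tau)\!))$, not up to $K_1(\mathcal{A})$; the entire content of the theorem is that the discrepancy lands in the smaller group, and for that you need the explicit geometric form of the stabilization supplied by Lin's lemma.
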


\begin{rem}\label{rem11} 
The assumption of invertibility of $ A_{F,W}$ is not strong in many cases. For example, consider the Jacobson radical $J(R ) $ of a ring $R$, i.e., $ J(R)=\{\,x\in R \mid 1+R x R\subset R^{\times }\,\}$. A matrix $B \in \mathrm{Mat}(n \times n, R) $ is invertible if and only if the quotient of $B$ in $\mathrm{Mat}(n \times n, R/J (R)) $ is invertible as well  (see \cite[Propitiation III.2.2 and Corollary III.2.7]{Bass2}). In conclusion, studying the quotient of $A_{F,W}$ in $\mathrm{Mat}(n \times n, \mathcal{A}_{\kappa}( \!( \tau) \!) / J(\mathcal{A}_{\kappa}( \!( \tau) \!) )) $ is sufficient for checking the invertibility of $A_{F,W}$. In particular, when the quotient $\mathcal{A}_{\kappa}( \!( \tau) \!) / J(\mathcal{A}_{\kappa}( \!( \tau) \!) )$ is semi-simple as in \S \ref{invdefS2}, we can easily verify that $A_{F,W}$ is invertible in many cases.
\end{rem}
Furthermore, we will show 
a criterion for fiberedness.
\begin{thm}\label{thm119}
A knot $K$ is fibered if and only if $ A_{F,W}$ is invertible for any $\rho$ satisfying $(\dagger)$.
\end{thm}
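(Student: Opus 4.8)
The plan is to prove both directions by analyzing the matrix $A_{F,W}$ together with the exact sequence \eqref{jjj5655}. The key observation is that invertibility of $A_{F,W}$ over $\mathcal{A}_{\kappa}(\!(\tau)\!)$ is equivalent to the vanishing of $H_1(S^3\setminus K;\mathcal{A}_{\kappa}(\!(\tau)\!))$, since \eqref{jjj5655} presents this homology as the cokernel of $A_{F,W}$ acting on a free module of the same finite rank $2g$; indeed, over any ring, a square matrix is invertible iff it is surjective as an endomorphism of the free module of the matching rank (surjectivity gives a right inverse, and a standard determinant-trick / rank argument promotes this to two-sided invertibility), so invertibility of $A_{F,W}$ $\iff$ the quotient map in \eqref{jjj5655} is zero $\iff$ $H_1(S^3\setminus K;\mathcal{A}_{\kappa}(\!(\tau)\!))=0$. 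So the theorem reduces to: $K$ is fibered $\iff$ $H_1(S^3\setminus K;\mathcal{A}_{\kappa}(\!(\tau)\!))=0$ for every $\mathcal{A},\kappa,\rho$ satisfying $(\dagger)$. By Theorem \ref{thm11} this homological condition is intrinsic to $\rho$, so there is no loss in working with a convenient Seifert surface throughout.

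For the ``only if'' direction, suppose $K$ is fibered with fiber $\Sigma_{g,1}$ and monodromy $\phi$. Then $F=\Sigma_{g,1}$ is a regular Seifert surface, and in the notation of \S\ref{ganeaS} the presentation \eqref{jjj} becomes a genuine mapping-torus presentation: the words $y_i=u_i^{\flat}$ and $z_i=u_i^{\sharp}$ in $x_1,\dots,x_{2g}$ are such that $\mathfrak{m}$ acts on the free group $\langle x_1,\dots,x_{2g}\rangle$ by the monodromy automorphism. Consequently the Fox-Jacobian matrix $\bigl(\partial y_j/\partial x_i\bigr)$ is the identity (as $y_i=x_i$ after a suitable choice of spine, or at worst a matrix over $\Z[\mathcal{F}]$ that becomes invertible), and $A_{F,W}=\tau\,\rho(\partial y_j/\partial x_i)-\rho(\partial z_j/\partial x_i)$ has the form $\tau I - \Phi$ where $\Phi\in\mathrm{Mat}(2g\times 2g,\mathcal{A})$ is the image of the Fox-Jacobian of the monodromy. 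Such a matrix is always invertible over the Novikov ring $\mathcal{A}_{\kappa}(\!(\tau)\!)$: one writes $\tau I-\Phi=\tau(I-\tau^{-1}\Phi)$ and inverts $I-\tau^{-1}\Phi$ by the geometric series $\sum_{k\ge 0}(\tau^{-1}\Phi)^k$, which converges $\tau$-adically because each term raises the minimal $\tau$-degree — this is exactly the point of passing to Novikov coefficients. Hence $A_{F,W}$ is invertible for every $\rho$ satisfying $(\dagger)$.

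For the ``if'' direction — which I expect to be the main obstacle — I would argue contrapositively: if $K$ is not fibered, produce a single $\rho$ for which $A_{F,W}$ fails to be invertible. The natural choice is the ``universal'' representation: take $H$ to be the subgroup of $\pi_1(S^3\setminus K)$ normally generated by the $x_i$ (equivalently the commutator subgroup, i.e. the kernel of $\pi_1\to\Z$), let $\mathcal{A}=\Z[H]$ with $\kappa$ conjugation by $\mathfrak{m}$, and let $\rho$ be induced by the identity; this satisfies $(\dagger)$. With these coefficients, $H_1(S^3\setminus K;\mathcal{A}_{\kappa}(\!(\tau)\!))$ is essentially the Novikov homology of the infinite cyclic cover, and by the Stallings fibration criterion together with work of Latour / Sikorav (Novikov homology detects fiberedness: a class in $H^1$ is fibered iff the associated Novikov homology vanishes), non-fiberedness of $K$ forces this homology to be nonzero. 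The technical content here is to verify that the abstractly-defined $\rho$ really does land in the completed ring $\mathcal{A}_{\kappa}(\!(\tau)\!)$ — i.e. that each relator word, rewritten via $\mathfrak{m} y_i\mathfrak{m}^{-1}=z_i$, contributes only finitely many negative $\tau$-powers — which is built into the structure of presentation \eqref{jjj}, and then to identify the cokernel of $A_{F,W}$ with the relevant Novikov homology via \eqref{jjj5655}. The one subtlety worth flagging: one must make sure that ``invertible over $\mathcal{A}_{\kappa}(\!(\tau)\!)$'' is the same as ``the presented module vanishes'' even when $\mathcal{A}$ is far from a field; this is handled by the exactness and finite-rank freeness in \eqref{jjj5655} and the surjective-endomorphism-of-free-module argument above, so no Ore or skew-field hypothesis is needed. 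Assembling these pieces gives both implications and completes the proof.
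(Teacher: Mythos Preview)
The ``only if'' direction (fibered $\Rightarrow$ invertible) has a genuine gap. You write $A_{F,W}=\tau I-\Phi=\tau(I-\tau^{-1}\Phi)$ and invert via $\sum_{k\ge 0}(\tau^{-1}\Phi)^k$, claiming ``each term raises the minimal $\tau$-degree.'' This is backwards: in the Novikov ring $\mathcal{A}_\kappa(\!(\tau)\!)=\mathcal{A}_\kappa[\![\tau]\!][\tau^{-1}]$ of the paper, elements have finitely many \emph{negative} powers of $\tau$, so the $k$-th term $(\tau^{-1}\Phi)^k$ sits in $\tau$-degree $-k$ and the series does not converge. The repair is to factor from the constant-term side: in the fibered case $\Phi$ is the $\rho$-image of the Fox Jacobian of a free-group automorphism, hence invertible over $\mathcal{A}$ by Birman's inverse function theorem \cite{Bir} (cf.\ \S\ref{InvHandle2}); then $\tau I-\Phi=-\Phi(I-\Phi^{-1}\tau)$, and $\sum_{k\ge0}(\Phi^{-1}\tau)^k$ does converge in $\mathcal{A}_\kappa[\![\tau]\!]$. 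The paper takes the mirror convention $z_i=x_i$, so that $A_{F,W}=\tau J-I$ already has unit constant part, and then runs an explicit Gaussian elimination (the conditions $(*)_k$ of \S\ref{wwS4}) to a diagonal matrix with entries in $W(\mathcal{A},\kappa)$; this simultaneously yields Theorem~\ref{ww2}.

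Your ``if'' direction is essentially the paper's: take the universal $\rho$ arising from $f=\mathrm{id}_{\pi_1(S^3\setminus K)}$ as in Example~\ref{exppp0}, and deduce fiberedness from $H_1(S^3\setminus K;\mathcal{A}_\kappa(\!(\tau)\!))=0$. Two small corrections. First, the paper appeals to Friedl \cite{Fri} rather than Sikorav/Latour; the latter results are stated for the \emph{abelian} Novikov ring, whereas here the coefficient ring is the noncommutative completion of $\Z[\pi_1]$, and \cite[Theorem~1.7]{Fri} is the statement tailored to that setting. Second, your claim that over any ring a surjective square matrix is invertible is false for general noncommutative rings (take $R=\mathrm{End}_k(V)$ with $\dim_k V=\infty$). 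You do not actually need it: the sequence \eqref{jjj5655} already begins with $0\to$, so $A_{F,W}$ is injective, and $H_1=0$ then makes it bijective and hence invertible.
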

\begin{rem}\label{thm11934}
Some criterions have been established for fiberedness in terms of (twisted) Alexander polynomials or Novikov rings; see \cite{Lic,FV,FV2}. As seen in \S \ref{wwS}, the proof of this theorem is based on a theorem in \cite{Fri}.

In the paper \cite{GS}, the authors defined $K_1$-valued topological invariants with respect to fibered spaces $Y \ra S^1$. 
In the last page, they pose a question to make knot-invariants, which lay in a $K_1$-group.
Thus, Definition \ref{def1156} gives a $K_1$-Alexander invariant, which counts as a generalization to the non-fibered case.
\end{rem}

We conclude this section by explaining that the twisted Alexander polynomial of \cite{Lin,Wada} can be formulated from our $K_1$-value invariant:
\begin{exa}\label{exa1124}
The papers \cite{Lin,Wada} defined a polynomial from a representation $\rho^{\rm pre}: \pi_1(S^3 \setminus K) \ra GL_n(R)$, in which $R$ is a commutative Noetherian unique factorization domain. For the formulation, let $\mathcal{A} $ be the matrix ring $\mathrm{Mat}(n \times n,R)$ and let $\kappa$ be the identity $\mathrm{id}_{\mathcal{A} }$. Formally, let $\tau$ be $\rho_{\rm pre} (\mathfrak{m})$ as a commutative indeterminate. Since there is a ring homomorphism $ \Z[ GL_n(R)] \ra \mathrm{Mat}(n \times n,R) $ which sends $\sum_{g} a_g g$ to $\sum_{g} a_g g$, the $ \rho^{\rm pre} $ gives rise to a ring homomorphism $\rho:\Z[\pi_1(S^3\setminus K) ] \ra \mathcal{A}_{\kappa}( \!( \tau) \!)$. Notice that the determinant $\mathrm{Mat}(n \times n,R ( \!( \tau) \!)) \ra R ( \!( \tau) \!)$ induces homomorphisms
$$K_1(\mathcal{A}_{\kappa}( \!( \tau) \!) ) \lra R ( \!( \tau) \!)^{\times} \ \ \ \ \ \mathrm{and} \ \ \ \ \ \mathcal{Q}_{\mathcal{A}, \kappa} \lra R ( \!( \tau) \!)^{\times} / R^{\times}.$$
After thoroughly checking \cite[Definition 3.2]{Lin}, we verify, by construction, that the determinant $\det (\Delta_{\rho}^{K_1} ) \in R ( \!( \tau) \!)^{\times} / R^{\times}$ coincides exactly with the twisted Alexander polynomial of \cite{Lin}. In the case of a knot, the twisted Alexander polynomial of \cite{Lin} is known to be equal to that of \cite{Wada}. In other words, our $K_1$-value $\Delta_{\rho}^{K_1}$ is a lift of the known twisted Alexander polynomials.
\end{exa}

\begin{exa}\label{thm1441}
Let $\mathcal{A}$ be $ \Q$, let $\kappa$ be $\mathrm{id}_{\Q}$, and let $\iota: \Z[t^{\pm 1}] \hookrightarrow \Q_{\kappa} ( \!( \tau) \!) $ be the ring monomorphism such that $\iota(t)= \tau$. Define $\rho$ to be the composite $\iota \circ \mathrm{Ab} : \Z[\pi_1(S^3 \setminus K)] \ra \Q_{\kappa} ( \!( \tau) \!) $. Then, the determinant $\Delta_{\rho}$ is exactly equal to the definition of the classical Alexander polynomial; see \cite[Chapter 6]{Lic} or \cite{Tro}. Thus, our $\Delta_{\rho} $ is also a generalization of the classical Alexander polynomial.
\end{exa}

\section{Two studies of the container $\mathcal{Q}_{\mathcal{A}, \kappa}$}
\label{QSS}
In general, it is difficult to compute a $K_1$-group and show non-triviality of an element in the group. However, using the works of \cite{PR,HKL} as our basis, we suggest two procedures for obtaining quantitative information from the group $\mathcal{Q}_{\mathcal{A}, \kappa}$.

\subsection{Logarithm of the container $\mathcal{Q}_{\mathcal{A}, \kappa}$}
\label{subQSS1}
To quantitatively study $\mathcal{Q}_{\mathcal{A}, \kappa}$, let us review the decomposition theorems of $K_1$ and the logarithm from $\mathcal{Q}_{\mathcal{A}, \kappa}$, due to Pajitnov \cite{P} and Ranicki \cite{PR}. 
In addition to previously established information, this subsection contains two new propositions \ref{ww2} and \ref{ww}. 

We will introduce two groups. First, let $W_1(\mathcal{A},\kappa) \subset K_1( \mathcal{A}[ \![ \tau] \!]) $ be the subgroup represented by Witt vectors; that is, the units in $ \mathcal{A}_{\kappa}( \!( \tau) \!) $ of the type $ w= 1 +\sum_{j=1}^{\infty} a_j \tau ^j\in \mathcal{A}_{\kappa}( \!( \tau) \!)^{\times }$. Next, a $\kappa$-endomorphism $\nu: P \ra P $ is said to be {\it nilpotent} if the functional power $\nu^k$ is the zero map for some $k \in \mathbb{N}.$ Then, we can define the exact category with objects
$$ (Q: \textrm{f.g. projective } \mathcal{A}\textrm{-module}, \ \ \ \ \ \nu : Q \ra Q \ : \textrm{nilpotent } \kappa \textrm{-endomorphism}). $$
Then, let us define $\mathrm{Nil}_0(\mathcal{A}, \kappa )$ to be the Grothendieck group of the exact category, and $K_0(\mathcal{A}  $ to be that of the category of f.g. projective $\mathcal{A}$-module. Furthermore, the {\it reduced nilpotent class group $\widetilde{\mathrm{Nil}}_0(\mathcal{A}, \kappa ) $} is defined to be
$$ \widetilde{\mathrm{Nil}}_0(\mathcal{A}, \kappa ) := \mathrm{Coker}(K_0(\mathcal{A}) \lra
\mathrm{Nil}_0(\mathcal{A}, \kappa ) ) $$
with $K_0(\mathcal{A}) \ra \mathrm{Nil}_0(\mathcal{A}, \kappa ) ; [Q] \mapsto [Q,0]$. This map is known to be splittable.

Let us review the main theorem in \cite{PR}. Consider the following map:
\begin{equation}\label{jjj555} \widehat{C}_1 \oplus \widehat{C}_2 \oplus\widehat{C}_3: K_1(\mathcal{A})\oplus W_1(\mathcal{A},\kappa) \oplus \widetilde{\mathrm{Nil}}_0(\mathcal{A}, \kappa^{-1} ) \lra K_1( \mathcal{A}_{\kappa}( \!( \tau) \!)) \end{equation}
defined by setting
\[ \widehat{C}_1: K_1(\mathcal{A}) \ra K_1( \mathcal{A}_{\kappa}( \!( \tau) \!)) ; \ \ M \longmapsto [\tau M : \mathcal{A}_{\kappa}( \!( \tau) \!) \ra \mathcal{A}_{\kappa}( \!( \tau) \!)] ,\]
\[ \widehat{C}_2: W_1(\mathcal{A},\kappa) \lra K_1( \mathcal{A}_{\kappa}( \!( \tau) \!)) ; \ \ w \longmapsto w, \]
\[\widehat{C}_3: \widetilde{\mathrm{Nil}}_0(\mathcal{A}, \kappa^{-1} )\lra K_1( \mathcal{A}_{\kappa}( \!( \tau) \!)) ; \ \ (Q,\nu) \longmapsto [1-\tau^{-1} \nu : Q( \!( \tau) \!) \ra Q ( \!( \tau) \!)]. \]
The main theorem in \cite{PR} shows that \eqref{jjj555} is an isomorphism by constructing an explicit inverse of $ \widehat{C}_1 \oplus \widehat{C}_2 \oplus\widehat{C}_3 $. From the definition of $\mathcal{Q}_{\mathcal{A}, \kappa}$, we obtain the isomorphism
\begin{equation}\label{jjj55} \widehat{C}_2 \oplus\widehat{C}_3: W_1(\mathcal{A},\kappa) \oplus  \widetilde{\mathrm{Nil}}_0(\mathcal{A}, \kappa^{-1} ) \cong \mathcal{Q}_{\mathcal{A}, \kappa}. \end{equation}
Here, it is known that $ \widehat{C}_2 ^{-1}(\tau) $ is zero in $W_1(\mathcal{A},\kappa). $

Then, we will show a criterion for fiberedness (see \S \ref{wwS4} for the proof).
\begin{thm}\label{ww2} 
Suppose that $K$ is a fibered knot. The matrix $A_{F,W}$ is diagonal, and $\tau^g \Delta_{ \rho}^{K_1}$ is contained in $W_1( \mathcal{A}, \kappa)$, for any $\rho$ with $(\dagger)$.
\end{thm}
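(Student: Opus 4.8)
The plan is to exploit the fibration structure of $K$ directly through a convenient choice of regular Seifert surface. Since $K$ is fibered, the fiber $\Sigma_{g,1}$ is itself a (regular) Seifert surface $F$, and cutting $S^3 \setminus K$ along $F$ yields $\Sigma_{g,1} \times [0,1]$. Concretely, one picks a spine $W$ of $F$ with free generators $u_1,\dots,u_{2g}$ of $\pi_1(F)$; the monodromy $\phi : \pi_1(F) \to \pi_1(F)$ then expresses the words $z_j = i_*(u_j^\sharp)$ entirely in terms of the $y_j = i_*(u_j^\flat) = x_j$, and in fact one may arrange $z_j = \phi(x_j)$ where $\phi$ is an automorphism of the free group $\mathcal F$. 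In this fibered presentation the Wada/Lin matrix becomes
\[
A_{F,W} = \tau\, \rho\!\left( \frac{\partial \phi(x_i)}{\partial x_j} \right)_{i,j} - I_{2g},
\]
since $\frac{\partial y_j}{\partial x_i} = \delta_{ij}$. First I would record this normal form, which shows $A_{F,W} = \tau B - I$ with $B \in \mathrm{Mat}(2g\times 2g,\mathcal A)$ the Fox Jacobian of the monodromy.

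Next I would analyze $B$. Because $\phi$ is a free-group automorphism, its Fox Jacobian $(\partial \phi(x_i)/\partial x_j)$ is invertible over $\Z[\mathcal F]$, hence $\rho$ of it is invertible in $\mathcal A$; moreover, after applying $\rho$ (which by $(\dagger)$ lands $\rho(x_i)$ in $\mathcal A$, so the monodromy action is realized by $\kappa$), one sees that $B$ is a $\kappa$-twisted automorphism of $\mathcal A^{2g}$. The key point is that $\tau B - I = -(I - \tau B)$, and $I - \tau B$ is precisely a unit of $\mathcal A_\kappa(\!(\tau)\!)$ of Witt type: writing it as $-\,(I - \tau B)$ and absorbing the diagonal sign $-I_{2g} \in GL_{2g}(\mathcal A)$ into the $K_1(\mathcal A)$ factor (which is quotiented out in $\mathcal Q_{\mathcal A,\kappa}$), the class $\tau^g \Delta_\rho^{K_1} = \tau^g[\tau^{-g}A_{F,W}] = [A_{F,W}]$ equals $[\,I - \tau B\,]$ up to $K_1(\mathcal A)$, and $I - \tau B = I + \sum_{j\ge 1}(\text{terms in }\tau^j)$ expanded out (here only the single term $-\tau B$ appears, but the point is the constant term is $I$) lies in $W_1(\mathcal A,\kappa)$ by the very definition of that subgroup. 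For the diagonality claim: I would note that the fiber surface gives a \emph{particularly symmetric} spine, or alternatively invoke that the decomposition \eqref{jjj55} together with $\widehat C_2^{-1}(\tau)=0$ forces $\tau^g\Delta_\rho^{K_1}$ to have trivial $\widetilde{\mathrm{Nil}}_0$-component, which by the explicit inverse of $\widehat C_1\oplus\widehat C_2\oplus \widehat C_3$ in \cite{PR} is detected by the vanishing of the relevant nilpotent endomorphism — and this vanishing is exactly the statement that $A_{F,W}$, after the normalization, is conjugate over $\mathcal A_\kappa(\!(\tau)\!)$ to a diagonal matrix with constant-term-$I$ entries.

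The main obstacle I anticipate is twofold. First, verifying that for a fibered knot one can \emph{choose} $F$ to be the fiber and that in Lin's handle-decomposition language the resulting $A_{F,W}$ really does take the clean form $\tau B - I$ — this requires carefully matching Lin's $y_j, z_j$ with the monodromy, checking that the bridge/meridian bookkeeping does not introduce extra $\tau$'s, and confirming $y_j = x_j$ exactly (not merely up to conjugation). Second, the word ``diagonal'' is strong; I suspect the intended reading is ``diagonal up to equivalence in $\mathcal Q_{\mathcal A,\kappa}$,'' i.e.\ the $K_1$-class is represented by a $1\times 1$ matrix $\det{}'(I-\tau B)$ (a Dieudonné determinant) which manifestly is a Witt unit. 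I would therefore split the proof into: (i) produce the normal form $A_{F,W}=\tau B - I$ from the fibered presentation; (ii) show $B$ is a $\kappa$-semilinear automorphism of $\mathcal A^{2g}$, so $\tau^{-g}A_{F,W}$ is invertible and its $K_1$-class times $\tau^g$ reduces mod $K_1(\mathcal A)$ to the class of $I-\tau B$; (iii) observe $I-\tau B \equiv 1 \pmod{\tau}$, placing it in $W_1(\mathcal A,\kappa)$, and read off the diagonalization from the isomorphism \eqref{jjj55} (the $\widetilde{\mathrm{Nil}}_0$-component must vanish because the presentation matrix is already of the form $1-\tau(\text{unit})$ rather than $1 - \tau(\text{nilpotent})$). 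Step (i) is where the real work lies; steps (ii)–(iii) are then formal consequences of the Pajitnov–Ranicki decomposition recalled above.
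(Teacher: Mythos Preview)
Your normal form $A_{F,W} = -(I_{2g} - \tau B)$ from the fibered presentation is the correct starting point, though you have the roles of $y_j$ and $z_j$ reversed: with your stated $y_j=x_j$, $z_j=\phi(x_j)$ one would get $\tau I - B$; the paper takes $z_i=x_i$, $y_i=\phi(x_i)$, which is what actually yields your displayed formula. The real gap is in your step~(iii). The claim that $[I_{2g}-\tau B]\in W_1(\mathcal{A},\kappa)$ holds ``by the very definition of that subgroup'' is not right: by definition $W_1$ is the image in $K_1$ of the $1\times 1$ Witt units $1+a_1\tau+\cdots$, and a $(2g\times 2g)$-matrix congruent to $I$ modulo $\tau$ is not literally such an element. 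Some reduction to size~$1$ is required, and that reduction is precisely the content of the theorem. Your subsequent appeal to the Pajitnov--Ranicki decomposition is in the right spirit but muddled: the $\widetilde{\mathrm{Nil}}_0$-summand in \eqref{jjj555} concerns $1-\tau^{-1}\nu$ with $\nu$ nilpotent, and your dichotomy ``unit vs.\ nilpotent'' is not what separates $W_1$ from $\widetilde{\mathrm{Nil}}_0$. Incidentally, the invertibility of $B$ is irrelevant: $I_{2g}-\tau B$ is a unit over $\mathcal{A}_\kappa[\![\tau]\!]$ regardless, by the geometric series.

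The paper closes this gap by an explicit Gaussian elimination rather than by invoking \cite{PR} structurally. One observes that $-A_{F,W}$ has every diagonal entry in $W(\mathcal{A},\kappa)$ and every off-diagonal entry in $\tau\,\mathcal{A}_\kappa[\![\tau]\!]$. Since each diagonal entry is a unit, one clears the $k$-th row and column by elementary row and column operations; the key (easy) check is that the new entries retain the same shape (diagonals in $W$, off-diagonals in $\tau\,\mathcal{A}_\kappa[\![\tau]\!]$). Iterating $2g$ times yields a diagonal matrix $\mathrm{diag}(b_{11},\dots,b_{2g,2g})$ with each $b_{ii}\in W(\mathcal{A},\kappa)$, and since elementary matrices vanish in $K_1$ this proves both assertions at once: your suspicion that ``diagonal'' is meant in the $K_1$-sense is correct, and the class modulo $K_1(\mathcal{A})$ is $\prod_i b_{ii}\in W_1(\mathcal{A},\kappa)$. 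A cleaner but less explicit route along the lines you sketched would be to note that $I_{2g}-\tau B\in GL_{2g}(\mathcal{A}_\kappa[\![\tau]\!])$ maps to $I_{2g}$ under the augmentation $\tau\mapsto 0$, and then use the splitting $K_1(\mathcal{A}_\kappa[\![\tau]\!])\cong K_1(\mathcal{A})\oplus W_1(\mathcal{A},\kappa)$ to conclude; but this must be stated and justified, not asserted as definitional.
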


In addition, we review a logarithm from $W_1( \mathcal{A},\kappa)$ defined in \cite{P}. By definition, $W_1( \mathcal{A},\kappa)$ is equal to the image $\mathrm{Im}(W(\mathcal{A},\kappa ) \hookrightarrow \mathcal{A}_{\kappa}( \!( \tau) \!)^{\times } ) $, where the inclusion is given by
\begin{equation}\label{jjj775588} W(\mathcal{A}, \kappa) = \{ 1+a_1 \tau +a_2 \tau^2 + \cdots \ | a_i \in \mathcal{A} \ \} \subset \mathcal{A}_{\kappa}( \!( \tau) \!)^{\times} \end{equation}
as  a subgroup. There is a natural surjective homomorphism,
$$ J: W( \mathcal{A}, \kappa)_{\rm ab} \lra W_1( \mathcal{A}, \kappa) .$$
For $n\in\mathbb{Z}_{\geq 0}$, let $P_n $ be the left $ \mathcal{A} $-module $ \mathcal{A} \tau^n \subset \mathcal{A}_{\kappa}( \!( \tau) \!)$. Let $P_n' $ be the abelian subgroup of $P_n$ generated by all the commutators $xy - yx$ where $ x \in P_k,y\in P_{n-k}$. Set
$$ \bar{P}_n=P_n/ P_n', \ \ \ \ P'= \prod_{n \geq 0} P_n', \ \ \ \ \ \bar{P}= \prod_{n \geq 0} \bar{P}_n=P/P' $$
as abelian groups. In general, the quotient $ \bar{P}_n=P_n/ P_n'$ is prone to being small or zero by non-commutativity. However,
\begin{exa}\label{op2} 
If $ \kappa^N= \mathrm{id}_{\mathcal{A}}$ for some $N \in \mathbb{N}$, then $ \bar{P}_N=P_N/ P_N'$ is isomorphic to the quotient abelian group,
\begin{equation}\label{jjj7755} \mathcal{A}/\langle x y - \kappa^m (yx) \rangle_{x,y \in \mathcal{A}, \ m \in \mathbb{Z}}, \end{equation}
by definition. If $\mathcal{A} $ is commutative, then $\bar{P}_N= \mathcal{A}/ \{a - \kappa (a) \}_{a \in \mathcal{A}} $. For example, if $\mathcal{A} $ is a commutative $\Q$-algebra, then $ \mathrm{dim}_{\Q} (\bar{P}_N) \geq \mathrm{dim}_{\Q}(\mathcal{A} )/N$, which shows the non-triviality of $ \bar{P}_N $.
\end{exa}
\noindent
Moreover, provided $\Q \subset \mathcal{A}$, let us define $\mathrm{log}: W(\mathcal{A}, \kappa) \ra \bar{P} $ as
$$ \mathrm{log}(1 + \mu\tau) = \mu \tau - \frac{(\mu\tau)^2}{2}+ \frac{(\mu\tau)^3}{3}- \cdots (-1)^{n-1} \frac{(\mu\tau)^n}{n}+ \cdots, \ \ \ \ \mathrm{where} \ \ \mu \in \mathcal{A}_{\kappa}[ \![ \tau] \!]. $$
It is shown in \cite[Lemma 1.1]{P} that $\Ker (J) \subset \Ker (\mathrm{log})$ and that the induced map $ \mathrm{Log} : W_1(\mathcal{A},\kappa) \ra \bar{P} $ is a homomorphism.

Using the logarithm, we will show a knot invariance (the proof will appear in \S \ref{wwS}):
\begin{prop}\label{ww} 
Suppose $\tau^g \Delta_{ \rho }^{K_1 } $ lies in $W_1( \mathcal{A}, \kappa)$. The $\tau^k$-coefficient of $\mathrm{Log } ( \tau^g \Delta_{ \rho}^{K_1 } ) $ in $\bar{P}_k$ does not depend on the choice of the meridian $\mathfrak{m}$, as the $\tau^k$-coefficient is an invariant of $\rho$ .
\end{prop}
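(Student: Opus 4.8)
The plan is to reduce the claimed meridian-independence to the invariance already granted by Theorem~\ref{thm11} together with an explicit analysis of how $A_{F,W}$, hence $\Delta_{\rho}^{K_1}$, transforms when the meridian $\mathfrak{m}$ is replaced by another meridian $\mathfrak{m}'$. Since any two meridians of $K$ are conjugate in $\pi_1(S^3\setminus K)$, write $\mathfrak{m}' = w \mathfrak{m} w^{-1}$ for some $w\in\pi_1(S^3\setminus K)$; under $\rho$ the element $w$ maps to some unit $\rho(w)\in\mathcal{A}_{\kappa}(\!(\tau)\!)^{\times}$. The first step is to track, using the presentation \eqref{jjj} and the Fox-derivative formula \eqref{oo7} for $A_{F,W}$, how the Jacobian matrix changes: replacing $\mathfrak{m}$ by a conjugate amounts, after a change of generators of the free group, to conjugating $A_{F,W}$ by a matrix over $\mathcal{A}_{\kappa}(\!(\tau)\!)$ and possibly multiplying by a power of $\tau$ coming from the normalization $\tau^{-g}$. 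Because $\mathcal{Q}_{\mathcal{A},\kappa}$ is a quotient of an abelian group $K_1(\mathcal{A}_{\kappa}(\!(\tau)\!))$, conjugation has no effect on the $K_1$-class, so $\Delta_{\rho}^{K_1}$ changes at worst by the class of $\rho(w)$ times the class of a power of $\tau$.

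The second step is to show that this ambiguity vanishes after taking $\mathrm{Log}$ and reading off the $\tau^k$-coefficient for $k\geq 1$. Here I would use the splitting \eqref{jjj55}: since $\tau^g\Delta_{\rho}^{K_1}$ is assumed to lie in $W_1(\mathcal{A},\kappa)$, and the correction factor (the image under $\rho$ of a word in the $x_i$'s and $\mathfrak{m}$) can be arranged, after the change of generators, to lie in $W_1(\mathcal{A},\kappa)$ as well, the product stays in $W_1(\mathcal{A},\kappa)$. Then $\mathrm{Log}$ is a homomorphism on $W_1(\mathcal{A},\kappa)$ by \cite[Lemma~1.1]{P}, so $\mathrm{Log}(\tau^g\Delta_{\rho'}^{K_1}) = \mathrm{Log}(\tau^g\Delta_{\rho}^{K_1}) + \mathrm{Log}(\text{correction})$. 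It remains to identify the correction term: I expect that after normalizing (absorbing the $\tau$-power using the fact, recorded after \eqref{jjj55}, that $\widehat{C}_2^{-1}(\tau)=0$ in $W_1(\mathcal{A},\kappa)$) the correction is the image of a \emph{constant} unit, i.e.\ an element of $\mathcal{A}^{\times}$ sitting inside $\mathcal{A}_{\kappa}(\!(\tau)\!)^{\times}$; such elements have $\mathrm{Log}$ concentrated in degree $0$, so the $\tau^k$-coefficient for $k\geq 1$ is unchanged. Alternatively, if the correction genuinely contributes in positive degrees, I would argue it is a boundary/commutator type term that dies in $\bar{P}_k = P_k/P_k'$.

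More carefully, the cleanest route is probably to go back to the handlebody picture of \S\ref{ganeaS}. Changing the meridian corresponds to choosing a different $1$-handle $\mathcal{H}$, equivalently a different isotopy class of arc from $F$ to itself; the van Kampen presentation then differs by the substitution $\mathfrak{m} \mapsto v\mathfrak{m}$, where $v$ is a word in the $u_j^{\flat}$'s (hence in the $x_i$'s), reflecting sliding the handle's attaching region. One then computes directly that $A_{F,W}$ for the new meridian equals $P\cdot A_{F,W}\cdot Q$ for suitable invertible matrices $P,Q$ over $\mathcal{A}$ (not involving $\tau$ in an essential way beyond the overall $\tau$-scaling), since the Fox derivatives $\partial y_j/\partial x_i$ are unchanged and only the $\tau$-prefactor's "position" is shuffled. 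The classes $[P],[Q]\in K_1(\mathcal{A})$ die in $\mathcal{Q}_{\mathcal{A},\kappa}$ by definition \eqref{oo}, so $\Delta_{\rho}^{K_1}$ is genuinely unchanged in $\mathcal{Q}_{\mathcal{A},\kappa}$ up to the $\tau$-power, and then $\mathrm{Log}$ of the $\tau$-power contributes only in degree $0$.

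The main obstacle I anticipate is the first step: making precise the claim that changing the meridian induces only a conjugation plus a $K_1(\mathcal{A})$-factor on $A_{F,W}$. The subtlety is that a new meridian in general requires re-expressing the words $y_j, z_j$ in terms of a new free basis of $\mathcal{F}$, and one must check that the resulting transformation of the Jacobian matrix, via the chain rule for Fox derivatives, produces a matrix factor with entries in $\mathcal{A}$ (not in $\mathcal{A}_{\kappa}(\!(\tau)\!)\setminus\mathcal{A}$) — or, if $\tau$-dependence is unavoidable, that the extra factor lies in $W_1(\mathcal{A},\kappa)$ with $\mathrm{Log}$ supported in degrees that do not interfere. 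Once the transformation law is pinned down, the rest is a formal consequence of the homomorphism property of $\mathrm{Log}$ and the vanishing of $K_1(\mathcal{A})$ in the quotient.
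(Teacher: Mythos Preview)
Your main line of argument has a structural gap. When you replace $\mathfrak{m}$ by $\mathfrak{m}'=h\mathfrak{m}h^{-1}$, Assumption~$(\dagger)$ forces you to build a \emph{new} Novikov ring $\mathcal{A}_{\kappa'}(\!(\tau')\!)$ with $\rho'(\mathfrak{m}')=\tau'$, and $\kappa'$ is conjugation by $\rho(\mathfrak{m}')$, which is generally different from $\kappa$. So the two classes $\Delta_{\rho}^{K_1}$ and $\Delta_{\rho'}^{K_1}$ live in different quotient groups $\mathcal{Q}_{\mathcal{A},\kappa}$ and $\mathcal{Q}_{\mathcal{A},\kappa'}$; it makes no sense to say one equals the other ``up to a correction factor in $\mathcal{A}^{\times}$'' until you have specified an identification of the two rings. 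Your proposal never does this, and the hoped-for statement that the correction has $\mathrm{Log}$ concentrated in degree~$0$ is not what actually happens.

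The paper's argument is short and bypasses your difficulty entirely. Conjugating all the generators $x_i,y_i,z_i$ by $h$ gives the presentation for the new meridian, and a direct check shows $(\partial y_j'/\partial x_i')^{\rho}=\rho(h)\,(\partial y_j/\partial x_i)^{\rho}\,\rho(h)^{-1}$, and similarly for $z$. One then writes down the ring isomorphism $\mathcal{T}:\mathcal{A}_{\kappa'}(\!(\tau')\!)\to\mathcal{A}_{\kappa}(\!(\tau)\!)$ sending $a(\tau')^m\mapsto \rho(h)^{m}a\,\tau^{m}\rho(h)^{-m}$; under $\mathcal{T}$ the new Jacobian is exactly the conjugate of the old one by the scalar matrix $\rho(h)$. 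Applying the logarithm degree by degree yields
\[
\mathrm{Log}_k\bigl((\tau')^{g}\Delta_{\rho'}^{K_1}\bigr)\cdot\rho(h)^{k}
=\rho(h)^{k}\cdot\mathrm{Log}_k\bigl(\tau^{g}\Delta_{\rho}^{K_1}\bigr)\quad\text{in }P_k,
\]
and this discrepancy is precisely a commutator $xy-yx$ with $x\in P_0$, $y\in P_k$, hence vanishes in $\bar P_k=P_k/P_k'$. In other words, the very definition of $\bar P_k$ is what absorbs the meridian ambiguity. You mention this mechanism only as a last-resort ``alternative''; in fact it is the entire content of the proof, and your primary strategy (arguing the correction has $\mathrm{Log}$ supported in degree~$0$, or that $\Delta_\rho^{K_1}$ itself is unchanged in a single $\mathcal{Q}_{\mathcal{A},\kappa}$) does not go through.
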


\subsection{Morita invariance on $K_1$ and the container $\mathcal{Q}_{\mathcal{A}, \kappa}$}
\label{subQSS2}
In this subsection, we use the Morita invariance of $K_1$ to construct another map from $\mathcal{Q}_{\mathcal{A}, \kappa}$. Generally, because $\mathcal{Q}_{\mathcal{A}, \kappa}$ with $ \kappa \neq \mathrm{id}_{\mathcal{A}}$ is complex, as seen in \eqref{jjj7755}, we shall take up a method to replace $\kappa$ with $\mathrm{id}_{\mathcal{A}}$. In order to do so, we suppose the existence of $N \in \mathbb{N}$ such that $\kappa^N= \mathrm{id}_{\mathcal{A}}$ throughout this subsection.

 Let us first define a ring homomorphism below \eqref{ooo2}. For $\ell \leq N$ and $a \in \mathcal{A}$, we set up a diagonal $(\ell \times \ell)$-matrix of the form
\[ \mathcal{D}_\ell (a):= \left(\begin{array}{rrrr} \kappa^\ell (a) & 0 & \cdots & 0\\
0 & \kappa^{\ell -1}(a) & \cdots & 0 \\
\vdots & \vdots & \ddots & \vdots \\
0 & 0 & \cdots & \kappa^{1}(a)
\end{array}\right) , \]
and denote the zero $(n \times m)$-matrix by $\mathbb{O}_{n,m} $. A square matrix is defined as
$$ M_{\ell}(a):= \left(\begin{array}{rr} \mathbb{O}_{N - \ell ,\ell} & \ \mathcal{D}_{N-\ell} ( \kappa^{\ell +1}(a)) \\
\mathcal{D}_\ell (a) & \!\!\!\!\!\mathbb{O}_{\ell, N - \ell}
\end{array}\right) \in \mathrm{Mat}(N \times N , \mathcal{A}) . $$
Let $t$ be a commutative indeterminate. Consider the Laurent polynomial ring $\mathcal{A}_{\rm id }( \!( t ) \!) $. Next, we introduce a map $\Upsilon$ defined by setting
\begin{equation}\label{ooo2}\Upsilon: \mathcal{A}_{\kappa}( \!( \tau) \!) \lra \mathrm{Mat}(N \times N ,\mathcal{A}_{\rm id }( \!( t ) \!)); \ \ \ \ \ \sum_{i\geq 1} a_i \tau^i \longmapsto \sum_{i \geq 1} M_{i}(a_i) t^i . \end{equation}
\begin{exa}\label{oo3}
Assume $N=3$. For $a,b,c \in \mathcal{A}$, the target $\Upsilon( a+ b\tau + c\tau ^2)$ is formulated as
\[
\left(
\begin{array}{rrr} \kappa^3(a) & 0 & 0 \\
0 & \kappa^2(a) & 0\\
0& 0&\kappa(a) \\
\end{array}
\right) t^0 + \left(
\begin{array}{rrr} 0 & 0 & \kappa^3(b) \\
\kappa^2(b) & 0 & 0\\
0& \kappa^1(b) &0 \\
\end{array}
\right)t + \left(
\begin{array}{rrr} 0 & \kappa^3(c) & 0\\
0 & 0 & \kappa^2(c) \\
\kappa^1(c) &0 &0\\
\end{array}
\right)t^2.
\]
\end{exa}
Then, we can easily verify the following proposition:
\begin{prop}\label{ooo244} 
The map $ \Upsilon$ is a ring homomorphism.
\end{prop}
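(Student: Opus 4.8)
The plan is to verify directly that $\Upsilon$ respects addition, multiplication, and the unit, by unwinding the definition of the matrices $M_\ell(a)$ and the twisting relation $\tau^n a = \kappa^n(a)\tau^n$ in $\mathcal{A}_\kappa(\!(\tau)\!)$. Additivity is immediate from the formula \eqref{ooo2}, since $M_i(a+b) = M_i(a)+M_i(b)$ (each block $\mathcal{D}_\ell$ is $\mathcal{A}$-linear in its argument and $\kappa$ is additive), so $\Upsilon\big(\sum a_i\tau^i + \sum b_i\tau^i\big) = \sum (M_i(a_i)+M_i(b_i))t^i = \Upsilon(\sum a_i\tau^i) + \Upsilon(\sum b_i\tau^i)$; preservation of the unit amounts to checking $\Upsilon(1) = M_0(1) = \mathcal{D}_N(1) = I_N$ (reading $M_0(a) = \mathcal{D}_N(a)$ as the $\ell=0$ degenerate case, where the zero blocks have width $0$). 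The substance is multiplicativity.

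For multiplicativity, since both sides are additive and $t$ (resp.\ $\tau$) is central in the respective rings, it suffices to prove $\Upsilon(a\tau^i \cdot b\tau^j) = \Upsilon(a\tau^i)\,\Upsilon(b\tau^j)$ for monomials, i.e. $M_{i+j}\big(a\,\kappa^i(b)\big) = M_i(a)\,M_j(b)$, where the left-hand index is read modulo $N$ and the exponents of $\kappa$ are interpreted modulo $N$ as well (using $\kappa^N = \mathrm{id}$). So the first step is to record the ``cyclic'' structure: $M_\ell(a)$ is a matrix whose only nonzero entries are a single nonzero entry in each row and column, namely $M_\ell(a)$ sends the basis vector $e_k$ essentially to $\kappa^{\mathrm{something}(k,\ell)}(a)\,e_{\sigma_\ell(k)}$ for a cyclic shift $\sigma_\ell$ of $\{1,\dots,N\}$ by $\ell$ steps. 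I would make this precise by writing $M_\ell(a)$ as $\mathcal{D}\cdot C^\ell$ where $C$ is the standard $N\times N$ cyclic permutation matrix and $\mathcal{D}$ is the diagonal matrix $\mathrm{diag}(\kappa^N(a),\kappa^{N-1}(a),\dots,\kappa^1(a))$ — one checks from Example \ref{oo3} that the $t^1$ and $t^2$ terms are exactly $\mathrm{diag}(\kappa^3(b),\kappa^2(b),\kappa(b))\cdot C$ and $\mathrm{diag}(\kappa^3(c),\kappa^2(c),\kappa(c))\cdot C^2$ when $N=3$, after matching the block description with the permutation-times-diagonal description. Then the key computation $M_i(a)M_j(b) = \mathcal{D}_a C^i \mathcal{D}_b C^j = \mathcal{D}_a (C^i \mathcal{D}_b C^{-i}) C^{i+j}$ reduces to the identity $C^i\,\mathrm{diag}(\kappa^N(b),\dots,\kappa(b))\,C^{-i} = \mathrm{diag}(\kappa^{N-i}(b),\dots)$ — a cyclic relabelling of a diagonal matrix — combined with $\kappa^N = \mathrm{id}$ to bring everything back into the range $\{1,\dots,N\}$, and finally matching $\mathcal{D}_a\cdot(\text{shifted }\mathcal{D}_b) = \mathrm{diag}(\kappa^k(a)\kappa^{k-i}(b))_k = \mathrm{diag}(\kappa^k(a\,\kappa^{-i}(b)))_k$, which is the diagonal part of $M_{i+j}(a\,\kappa^i(b))$ once one also tracks the $\kappa^i$ that appears in the twisting relation $\tau^i b = \kappa^i(b)\tau^i$.

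The main obstacle — really the only place care is needed — is bookkeeping the exponents of $\kappa$ modulo $N$ and making sure the index shifts in the block decomposition of $M_\ell$ match the conjugation by powers of the cyclic permutation $C$; the apparent asymmetry in the definition (the off-diagonal block carries $\kappa^{\ell+1}(a)$ rather than $\kappa^\ell$) is exactly what is forced by this matching, and getting a sign or an offset wrong there is the easy mistake to make. Once the identification $M_\ell(a) = \mathcal{D}(a)\,C^\ell$ is established and the conjugation rule $C^i \mathcal{D}(b) C^{-i} = \mathcal{D}'(b)$ (a diagonal matrix with cyclically permuted entries) is stated cleanly, the rest is a one-line verification, and I would present it at that level of detail rather than grinding through all $N^2$ entries. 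I expect this is why the paper calls the proposition one that ``we can easily verify.''
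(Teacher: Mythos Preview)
Your approach is correct. The paper itself gives no proof of this proposition at all --- it simply asserts that ``we can easily verify'' it and moves on --- so there is nothing to compare against beyond the bare claim. Your decomposition $M_\ell(a) = \mathcal{D}(a)\,C^\ell$ with $\mathcal{D}(a)=\mathrm{diag}(\kappa^N(a),\dots,\kappa(a))$ and $C$ the cyclic shift is exactly the right way to organize the verification: it reduces multiplicativity to the conjugation identity $C^i\mathcal{D}(b)C^{-i} = \mathcal{D}(\kappa^i(b))$ (one checks the $(k,k)$-entry becomes $\kappa^{N-k+1+i}(b)=\kappa^{N-k+1}(\kappa^i(b))$), from which $M_i(a)M_j(b)=\mathcal{D}(a\kappa^i(b))C^{i+j}=M_{i+j}(a\kappa^i(b))$ follows in one line, matching $\Upsilon(a\tau^i\cdot b\tau^j)$. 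The only caveat is the sign of the shift you wrote ($\kappa^{k-i}$ vs.\ $\kappa^{k+i}$): with the convention forced by Example~\ref{oo3} the conjugation raises the $\kappa$-exponent by $+i$, not $-i$, so the ``$\kappa^{-i}$'' in your penultimate display should be $\kappa^{+i}$ --- but you flag exactly this bookkeeping as the delicate point, and once corrected the argument goes through.
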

\noindent
Therefore, the homomorphism $ \Upsilon$ induces
$$ \Upsilon_* : K_1 (\mathcal{A}_{\kappa}( \!( \tau) \!) ) \lra K_1 (\mathrm{Mat}(N \times N , \mathcal{A}_{\rm id }( \!( t ) \!)) ), \ \ \ \ $$
Furthermore, the Morita invariance of $K_1$ (see, e.g., \cite[Lemma 1.10]{Mil} or \cite[Example 1.1.4]{Wei}) implies the isomorphism
\begin{equation}\label{ooo621}\mathcal{M}: K_1 (\mathrm{Mat}(N \times N , \mathcal{A}_{\rm id }( \!( t ) \!) ) )\cong
K_1 ( \mathcal{A}_{\rm id }( \!( t ) \!)) . \end{equation}
According to the decomposition \eqref{jjj555}, we can see that the composite $ \mathcal{M} \circ \Upsilon_* $ descends to
$$\mathcal{M} \circ \Upsilon_* : \mathcal{Q}_{\mathcal{A} , \kappa} \lra \mathcal{Q}_{\mathcal{A}, {\rm id} }, \ \ \ \ \ \ W_1(\mathcal{A} , \kappa) \lra W_1(\mathcal{A} , {\rm id} ). $$
\begin{exa}\label{oo66}
Suppose that $\mathcal{A}$ is a commutative ring. The isomorphism \eqref{ooo621} is represented by the determinant map. Moreover, if the determinant map induces the isomorphism $ K_1( \mathcal{A}_{\rm id }( \!( t ) \!) ) \cong \mathcal{A}_{\rm id }( \!( t ) \!) ^{\times }$ as in semi-local rings (see \S \ref{invdefS2}), the previous composite $\mathcal{M} \circ \Upsilon_* $ induces a homomorphism,
\begin{equation}\label{ooo6} \mathrm{det} \circ \Upsilon_* : \mathcal{Q}_{\mathcal{A} , \kappa} \lra \mathcal{A}_{\rm id }( \!( t ) \!) ^{\times } / \mathcal{A}^{\times}. \end{equation}
In summary, $\mathrm{det} \circ \Upsilon_* ( \Delta_{\rho}^{K_1})$ lies in a commutative object in such a situation. This will be studied with relation to cyclic covering spaces; see \cite{Nos}.
\end{exa}
Finally, we will show independence from meridians $\mathfrak{m}$ (see \S \ref{wwS} for the proof).
\begin{prop}\label{ww3} 
Suppose $ N $ such that $\kappa^N = \mathrm{id}$. Then, the pushforward of the $K_1$-invariant $ \mathcal{M} \circ \Upsilon_*( \Delta_\rho^{K_1})$ is independent of the choice of the meridian $\mathfrak{m}.$
\end{prop}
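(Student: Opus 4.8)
The plan is to reduce the independence statement to the already-established invariance properties, combined with a careful analysis of how a change of meridian affects the data $(\mathcal{A}, \kappa, \rho)$. First I would spell out what changing $\mathfrak{m}$ means: by the Wirtinger-type flexibility, any other meridian $\mathfrak{m}'$ is conjugate to $\mathfrak{m}$, say $\mathfrak{m}' = g \mathfrak{m} g^{-1}$ for some $g \in \pi_1(S^3 \setminus K)$; equivalently, after a further homotopy one may assume $\mathfrak{m}' = w \mathfrak{m}$ with $w$ lying in the commutator subgroup (since $\mathfrak{m}$ and $\mathfrak{m}'$ have the same image in $H_1$). Under $\rho$, this replaces $\tau$ by $\rho(w)\tau$, where $\rho(w) \in \mathcal{A}_\kappa(\!(\tau)\!)^\times$ has the form $1 + (\text{higher order in }\tau)$ — that is, $\rho(w) \in W(\mathcal{A}, \kappa)$. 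So the effect on the Novikov ring is an automorphism of the form $\tau \mapsto u\tau$ for a Witt unit $u$, and on $A_{F,W}$ the effect is a multiplication by such a unit together with the corresponding change of the Jacobian entries.

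Next I would observe that the target of $\mathcal{M} \circ \Upsilon_*$, namely $\mathcal{Q}_{\mathcal{A}, \mathrm{id}}$, is insensitive to exactly this kind of change. Concretely, I would use the decomposition $\mathcal{Q}_{\mathcal{A}, \kappa} \cong W_1(\mathcal{A}, \kappa) \oplus \widetilde{\mathrm{Nil}}_0(\mathcal{A}, \kappa^{-1})$ of \eqref{jjj55}, together with the fact recorded after \eqref{jjj55} that $\widehat{C}_2^{-1}(\tau)$ is zero in $W_1(\mathcal{A}, \kappa)$. The key point is that $\Upsilon$ sends the scalar $\tau$ to $M_1(1)\,t$, whose Dieudonné/ordinary determinant is (up to sign, hence trivial in $\mathcal{A}^\times$) a power of $t$, so the ambiguity "multiply by $\tau$" is killed in $\mathcal{Q}_{\mathcal{A}, \mathrm{id}}$; more generally I would show that $\mathcal{M} \circ \Upsilon_*$ annihilates the image of the Witt-unit $u = \rho(w)$, because $u$, being of the form $1 + a_1\tau + \cdots$, maps under $\Upsilon$ to a matrix of the form $I + M_1(a_1)t + \cdots$, whose class in $K_1(\mathcal{A}_{\mathrm{id}}(\!(t)\!))$ already lies in $W_1(\mathcal{A}, \mathrm{id})$, and one checks its contribution is absorbed into the $K_1(\mathcal{A})$ being quotiented out — or rather, I would combine this with Proposition \ref{ww} (the Log-coefficients are meridian-independent) to pin down that the $W_1$-component is unchanged, while the $\widetilde{\mathrm{Nil}}_0$-component is unchanged simply because it is built from the nilpotent part of $A_{F,W}$ mod $\tau$, which a Witt-unit rescaling of $\tau$ does not touch.

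The cleanest route, which I would actually pursue, is to invoke Theorem \ref{thm11} directly: $\Delta_\rho^{K_1}$ depends only on the ring homomorphism $\rho : \Z[\pi_1(S^3 \setminus K)] \to \mathcal{A}_\kappa(\!(\tau)\!)$, not on the choice of $F$, $W$, or generators. Changing the meridian from $\mathfrak{m}$ to $\mathfrak{m}'$ produces a genuinely different $\rho'$, but $\rho'$ and $\rho$ are intertwined by the inner automorphism of $\mathcal{A}_\kappa(\!(\tau)\!)$ (or of $\pi_1$) implementing the conjugation, and I would show that after applying $\mathcal{M} \circ \Upsilon_*$ this inner automorphism becomes an inner automorphism of $\mathrm{Mat}(N \times N, \mathcal{A}_{\mathrm{id}}(\!(t)\!))$, hence acts trivially on $K_1$ since $K_1$ is abelian and conjugation is trivial on abelianizations. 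The one genuinely delicate step — and I expect this to be the main obstacle — is handling the case $\mathfrak{m}' = w\mathfrak{m}$ with $w$ not a conjugate but merely homologically trivial: then the intertwiner is the left multiplication $\tau \mapsto \rho(w)\tau$, which is not an inner automorphism, and one must argue separately that its pushforward under $\mathcal{M}\circ\Upsilon_*$ is trivial on the quotient $\mathcal{Q}_{\mathcal{A}, \mathrm{id}}$. For this I would compute that $\Upsilon$ intertwines $\tau \mapsto \rho(w)\tau$ with conjugation by (a block-permutation-times-diagonal matrix representing) $\Upsilon(\rho(w))$ composed with the scalar-$\tau$-shift $\widehat{C}_1$, and that the former is inner (trivial on $K_1$) while the latter lands in the subgroup already quotiented out in the definition of $\mathcal{Q}_{\mathcal{A}, \mathrm{id}}$. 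Assembling these observations gives the claimed independence.
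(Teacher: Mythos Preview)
Your ``cleanest route'' paragraph contains exactly the paper's argument: any two meridians are conjugate, say $\mathfrak{m}' = h\mathfrak{m}h^{-1}$; replacing the generators $x_i$ by $hx_ih^{-1}$ one computes that the new presentation matrix is $\rho(h)A_{F,W}\rho(h)^{-1}$; since $\Upsilon$ is a ring homomorphism (Proposition~\ref{ooo244}) this becomes $\Upsilon(\rho(h))\,\Upsilon(A_{F,W})\,\Upsilon(\rho(h))^{-1}$, and conjugate elements are equal in $K_1$. That is the entire proof in the paper, occupying two lines.

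The problem with your proposal is that you manufacture a difficulty that does not exist. You yourself begin by noting that any two meridians of a knot are conjugate, but you then treat the rewriting $\mathfrak{m}' = w\mathfrak{m}$ with $w\in[\pi_K,\pi_K]$ as introducing a genuinely new case (``$w$ not a conjugate but merely homologically trivial''), and devote most of your outline to it. There is no such separate case: the element $w$ is precisely $h\mathfrak{m}h^{-1}\mathfrak{m}^{-1}$, so the substitution $\tau\mapsto\rho(w)\tau$ \emph{is} $\tau\mapsto\rho(h)\tau\rho(h)^{-1}$, i.e.\ conjugation by $\rho(h)$ in $\mathcal{A}_\kappa(\!(\tau)\!)$. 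Your reparametrization $\mathfrak{m}'=w\mathfrak{m}$ hides the inner-automorphism structure and then forces you to rediscover it by an indirect route through Witt units, $\mathrm{Log}$, and the $\widetilde{\mathrm{Nil}}_0$-component. None of that machinery is needed here; the first two paragraphs of your proposal are detours, and the ``delicate step'' you anticipate is vacuous.
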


\section{$K_1$-Alexander polynomials from semi-local situations}
\label{invdefS2}
We will consider situations in which the invariant $\Delta_{\rho}^{K_1} $ can be studied from the viewpoint of the determinant.

We start by reviewing semi-local rings. A ring $R$ is {\it semi-local} if $ R/J(R)$ is a semi-simple ring (in which $J(R)$ is the Jacobson radical). A ring $R$ is {\it semi-perfect}, if $R$ is semi-local and every idempotent of $R/J(R)$ can be lifted to $R$. For example, it is known \cite{Zie} that, if $ \mathcal{A}$ is semi-perfect, so is $ \mathcal{A}_{\kappa}( \!( \tau) \!)$.
But the converse is not true; it is worth of noting the fact \cite{Solin} that if the skew Laurent polynomial ring $\mathcal{A}_{\kappa}( \!( \tau) \!) $ is semi-local, then $ \mathcal{A}$ is semi-perfect and $J(\mathcal{A})$ is equal to the nilpotent radical. 

Next, we review the Whitehead determinant. Let $R$ ba a semi-local ring with unit $1$. If $r,s \in R$ satisfy $1+rs \in R^{\times}$, so does $1+sr$ because $(1+sr)(1-s(1+rs)^{-1}r)=1$. Let $\mathcal{V}(R)$ denote the subgroup of $R^{\times} $ generated by $ (1+rs )(1+sr )^{-1}$. We can easily see that $\mathcal{V}(R)$ contains the commutator subgroup $[R^{\times }, R^{\times }]$. For a generalization of the Dieudonn\'{e} determinant, there uniquely exists a determinant map \cite[\S 2]{Bass}, 
$$ \mathrm{det}: \mathrm{Mat}(n \times n, R) \lra \bigl( R^{\times }/\mathcal{V}(R) \bigr) \cup \{0\},$$
with the following properties:
\begin{enumerate}[(i)]
\item $ \det (A) \det (B)= \det (A \cdot B)$ holds for any $n \times n$-matrices $A,B$ over $ R $, where the dot $\cdot $ represents the multiplication of matrices.
\item The determinant is invariant under elementary row operations.
\item The determinant of the identity is 1.
\item If a matrix $B$ is invertible, then $\mathrm{det}(B) $ lies in $ R^{\times }/\mathcal{V}(R) $; otherwise $\mathrm{det}(B)=0$.
\end{enumerate}
See also, e.g., \cite{Vas3,Vas} or \cite[\S 3.1]{Wei} for the definition. It is shown that if $R$ is semi-local, the inclusion $\lambda: R^{\times} =GL_1(R) \hookrightarrow GL (R)$ gives rise to an isomorphism $\lambda_* : R^{\times}/ \mathcal{V}(R) \cong K_1(R) $ and the Whitehead determinant $ GL(R) \ra R^{\times} / \mathcal{V}(R) $ yields $ K_1(R) \cong R^{\times} /\mathcal{V}(R)$ as well (see \cite{Vas3}). Therefore, when $R= \mathcal{A}_{\kappa}( \!( \tau) \!) $ is semi-local, the quotient group $\mathcal{Q}_{\mathcal{A}, \kappa} $ in \eqref{oo} is, by definition, isomorphic to
\begin{equation}\label{ooo}\mathcal{Q}_{\mathcal{A}, \kappa}^{\det }:= \frac{ \mathcal{A}_{\kappa}( \!( \tau) \!)^{\times }/\mathcal{V}( \mathcal{A}_{\kappa}( \!( \tau) \!)) }{\mathcal{A}^{\times }/\mathcal{V}( \mathcal{A}) }. \end{equation}
\begin{defn}\label{def11} 
Let $\rho : \Z[ \pi_1(S^3 \setminus K)] \ra \mathcal{A}_{\kappa}( \!( \tau) \!)$ satisfy the assumption $(\dagger)$. Suppose that $ \mathcal{A}_{\kappa}( \!( \tau) \!)$ is semi-local. We define the {\it semi-local Alexander polynomial (with respect to $\rho$)} to be the class of the determinant of $\tau^{-g} A_{F,W} $ in $ \mathcal{Q}_{\mathcal{A}, \kappa} $. More precisely,
$$ \Delta_{\rho} := [\det \bigl( \tau^{-g} A_{F,W} \bigr)] \in \mathcal{Q}_{\mathcal{A}, \kappa}^{\det } \cup \{0\}. $$
\end{defn}
Next, we will give examples showing that $ \mathcal{A}$ is semi-local and $\rho$ satisfies the assumption $(\dagger)$

\begin{exa}\label{exppp44}
For an Artinian ring $R$ (e.g., $R$ is a commutative field) in a generalized situation of \cite{Lin}, we fix a representation $ \rho^{\rm pre}: \pi_1(S^3\setminus K) \ra GL_n(R)$. Let $\mathcal{A} $ be the matrix ring $\mathrm{Mat}(n \times n,R)$, which is an Artinian ring, and let $\kappa$ be the identity $\mathrm{id}_{\mathcal{A} }$. Formally, let $\tau$ be $\rho^{\rm pre} (\mathfrak{m})$ as a commutative indeterminate. As in Example \ref{exa1124}, this $ \rho^{\rm pre} $ gives rise to a ring homomorphism $\rho:\Z[\pi_1(S^3\setminus K) ] \ra \mathcal{A}_{\kappa}( \!( \tau) \!)$, which satisfies Assumption ($\dagger$).
\end{exa}

\begin{exa}\label{exppp35}
The following is an example in a metafinite sense. We shall mention the fact:
\begin{prop}[\cite{Wood}]\label{expppop35} 
Let $B$ be a semi-local commutative ring and $G$ a group. If one of the following holds, the group ring $B[G]$ is semi-perfect.
\begin{enumerate}[(a)]
\item $B$ is a field and $G$ is of finite order.
\item $B$ is a perfect local ring with char($B$) = $p > 0$, $G$ is a locally finite group, and $G$ has a $p$-subgroup of finite index. 
\end{enumerate}
\end{prop}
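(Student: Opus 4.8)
The plan is to deduce semi-perfectness in both cases from the elementary fact that every left Artinian ring is semi-perfect, by exhibiting a nil two-sided ideal whose quotient is a finite-dimensional algebra over a field. Two standard facts will be used throughout: a nil ideal is always contained in the Jacobson radical, and idempotents lift modulo a nil ideal; consequently, if $N$ is a nil ideal of a ring $S$ with $S/N$ semi-perfect, then $S$ is itself semi-perfect, since $S/J(S)=(S/N)/J(S/N)$ is semi-simple and an idempotent of $S/J(S)$ lifts first to $S/N$, then across $N$ to $S$. Case (a) is then immediate: if $B$ is a field and $G$ is finite, $B[G]$ is a finite-dimensional $B$-algebra, hence left Artinian, hence semi-perfect.

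For case (b), set $k=B/J(B)$, a field of characteristic $p$, and let $P_0=\bigcap_{g\in G}gPg^{-1}$ be the normal core of $P$. Since $[G:P]<\infty$, $P$ has finitely many conjugates, so $P_0$ is normal in $G$ with $G/P_0$ finite; moreover $P_0\subseteq P$ is a $p$-subgroup and, being a subgroup of $G$, is locally finite. Let $\pi\colon B[G]\to k[G/P_0]$ be the ring homomorphism induced by $B\twoheadrightarrow k$ and $G\twoheadrightarrow G/P_0$. A direct computation with cosets shows $\ker\pi=J(B)\cdot B[G]+\omega\cdot B[G]$, where $\omega\subseteq B[P_0]$ is the augmentation ideal, and $\omega\cdot B[G]=B[G]\cdot\omega$ is two-sided because $P_0\triangleleft G$. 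Since $G/P_0$ is finite, $B[G]/\ker\pi\cong k[G/P_0]$ is a finite-dimensional $k$-algebra, hence Artinian and semi-perfect; by the previous paragraph it therefore suffices to prove that $N:=\ker\pi$ is a nil ideal.

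So let $x\in N$. Because $G$ is locally finite, the subgroup $H:=\langle\,\mathrm{supp}(x)\,\rangle$ is finite, and since $\pi$ carries $B[H]$ onto $k[HP_0/P_0]$ one has $x\in N\cap B[H]=\ker\bigl(B[H]\to k[H/(H\cap P_0)]\bigr)=J(B)\cdot B[H]+\omega_H\cdot B[H]$, where $Q:=H\cap P_0\triangleleft H$ is a finite $p$-group and $\omega_H$ is the augmentation ideal of $B[Q]$. Modulo the ideal $J(B)\cdot B[H]$, this kernel becomes the relative augmentation ideal $\omega(k[Q])\cdot k[H]$, which is nilpotent: using $Q\triangleleft H$ one gets $\bigl(\omega(k[Q])\cdot k[H]\bigr)^{t}=\omega(k[Q])^{t}\cdot k[H]$, and $\omega(k[Q])$ is a nilpotent ideal of the Artinian local ring $k[Q]$ because $\mathrm{char}(k)=p$ and $Q$ is a finite $p$-group. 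Hence $x^{t}\in J(B)\cdot B[H]$ for some $t$. Finally $J(B)\cdot B[H]$ is nil: given any of its elements, the ideal of $B$ generated by its (finitely many) coefficients is a finitely generated sub-ideal of the left $T$-nilpotent ideal $J(B)$ (left $T$-nilpotent because $B$ is perfect), hence nilpotent by a König's lemma argument, so that element is nilpotent. Combining, $x^{t}$, and therefore $x$, is nilpotent; thus $N$ is nil and $B[G]$ is semi-perfect.

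The delicate point is the last paragraph, and it is precisely there that each of the three hypotheses of (b) is used essentially: local finiteness of $G$ to replace $x$ by an element of a finite group algebra, characteristic $p$ together with the $p$-subgroup of finite index to force the relative augmentation ideals to be nilpotent, and perfectness of $B$ to make $J(B)\cdot B[H]$ nil. One must also handle the non-normality of $P$ in $G$ (resolved by passing to the core $P_0$) and the identification $N\cap B[H]=\ker(B[H]\to k[H/(H\cap P_0)])$, which rests on $H\cap P_0$ being the kernel of the restricted homomorphism and hence normal in $H$.
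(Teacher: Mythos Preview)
The paper does not give its own proof of this proposition; it is simply quoted from \cite{Wood} as a known result. Your argument is correct and self-contained: case (a) is immediate from Artinianity, and in case (b) you correctly reduce to showing that the kernel of $B[G]\to k[G/P_0]$ is nil by passing to the normal core $P_0$ of $P$, then use local finiteness to work inside a finite subgroup $H$, the $p$-group hypothesis in characteristic $p$ to force nilpotence of the relative augmentation ideal $\omega(k[Q])\cdot k[H]$, and perfectness of $B$ (via T-nilpotence of $J(B)$ together with K\"onig's lemma) to show $J(B)\cdot B[H]$ is nil. All steps are sound, including the identifications $N\cap B[H]=\ker\bigl(B[H]\to k[H/(H\cap P_0)]\bigr)$ and $(\omega(k[Q])\cdot k[H])^t=\omega(k[Q])^t\cdot k[H]$, which follow from normality of $Q=H\cap P_0$ in $H$.
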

\noindent
In addition, let $ \pi_K'$ be the commutator subgroup of $ \pi_1(S^3 \setminus K)$ and $H$ a finite group acted on by $\Z$. Let $B$ be one of the following: $\Q$, $\Z/m\Z$, or the $p$-adic integer $\Z_p$. As seen in Example \ref{exppp0}, if we have a $\Z$-equivariant epimorphism $ \rho^{\rm pre} : \pi_K' \ra H$, we have $\rho: \pi_1(S^3 \setminus K) \ra G $, where we let $ G$ be  $ H \rtimes \Z$. Then, $\mathcal{A} =B [H]$ is semi-perfect by Proposition \ref{expppop35} and $\rho$ satisfies the Assumption $(\dagger)$.
\end{exa}
\begin{defn}\label{def133} 
Let $ G=H \rtimes \Z$ and $\mathcal{A} =B [H]$ be as in Example \ref{exppp35}, where $H$ is a finite group. Let $ N \in \Z_{>0}$ be the minimal number satisfying $ \kappa^N= \mathrm{id}$. We define the {\it metafinite Alexander polynomial (associated to $\rho^{\rm pre}$)} to be the polynomial
$$ \Delta_{ \rho^{\rm pre}_H}^{\Upsilon} := \mathrm{det} \circ \Upsilon_* \bigl( \Delta^{K_1}_{ \rho} \bigr) \in
\mathcal{Q}_{\mathcal{A}, \kappa}^{\det } \cup \{0\} . $$
\end{defn}

Finally, we will see that this definition includes the (metabelian) twisted Alexander polynomials in \cite{HKL}. Let us review the setting in \cite{HKL}. Fix $N \in \mathbb{N}$ and take the cyclic $N$-fold covering space $E_K^N$ of $S^3 \setminus K$. Let $ H$ be the torsion subgroup of the homology $ H_1(E_K^N;\Z)$. Then, we canonically have a group homomorphism $\rho^{\rm meta}_{ N}: \pi_1(S^3 \setminus K ) \ra H \rtimes \Z$.

\begin{defn}\label{def13312} 
Take $N \in \mathbb{N}$. In the above situation, as in $\mathcal{A}=\Q[H]$ with $H=\mathrm{Tor}H_1(E_K^N;\Z) $, we define the {\it $N$-fold metabelian Alexander polynomial} to be the polynomial,
$$ \Delta_{ \rho^{\rm meta}_{ N} }^{\Upsilon} := \mathrm{det} \circ \Upsilon_* \bigl( \Delta^{K_1}_{ \rho^{\rm meta}_{ N}} \bigr) \in
\mathcal{A}_{\rm id }( \!( t ) \!) ^{\times } / \mathcal{A}^{\times} \cup \{0\} . $$
\end{defn}
\begin{rem} 
Given a homomorphism $\chi: \mathrm{Tor}H_1(E_N;\Z) \ra \Z/q $ for some $q \in \mathbb{N}$, the pushforward $\chi_* ( \Delta_{ \rho^{\rm meta}_{ N} }^{\Upsilon} )$ coincides exactly with the twisted Alexander polynomial, which is defined in \cite[\S 7]{HKL}. Thus, Definitions \ref{def133} and \ref{def13312} are a slight generalization of the polynomials.
\end{rem}
By Proposition \ref{ww3}, the polynomial $\Delta_{ \rho^{\rm meta}_{ N} }^{\Upsilon} $ turns out to be a knot invariant.

In \S \ref{ExaSSs2}, we will compute some $\Delta_{ \rho^{\rm meta}_{N} }^{\Upsilon} $ and show the non-triviality of $ \Delta_{ \rho^{\rm meta}_{N} }^{\Upsilon} $ for some cases.

\section{Examples of $K_1$-Alexander invariants}
\label{ExaSS}
This section contains the computations for the $K_1$-Alexander invariants $\Delta_{\rho}^{K_1}$ of the trefoil knot, figure eight knot, and $5_2$-knot. To compute $\Delta_{\rho}^{K_1}$, the words $y_i$ and $z_i$ in \eqref{jjj} must be concretely defined first. If $K$ is a fibered knot with crossing number $<11$, the monodoromy $\phi:\pi_1 (F) \ra \pi_1(F)$ is presented in KnotInfo \cite{CL}. In this case, since $y_i$ is equal to $ x_i$ and $z_i =\phi(x_i)$, we can obtain the presentation as in \eqref{jjj} for such fibered knots. On the other hand, for a non-fibered knot, \cite{GS} suggests an algorithm for obtaining such a presentation from a Seifert surface.

\begin{figure}[htpb]
\begin{center}
\begin{picture}(10,60)
\put(-161,22){\pc{coloringtrefoil4}{0.53104}}
\put(-36,22){\pc{figure.eight}{0.402104}}
\put(86,22){\pc{52knot}{0.402104}}
\end{picture}
\end{center}
\caption{\label{ftf} The trefoil knot, figure eight knot, and $5_2$-knot.}
\end{figure}
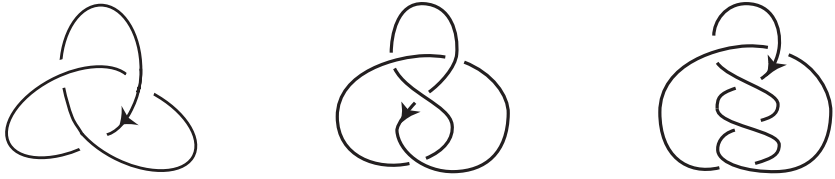

\subsection{Trefoil knot}
\label{ExaSSs}
First, let us focus on the trefoil knot $3_1$. According to KnotInfo \cite{CL}, we have the presentation,
$$ \pi_1(S^3 \setminus 3_1) \cong \langle x_1, x_2, \mathfrak{m} \ | \ \mathfrak{m} x_1 x_2^{-1} \mathfrak{m}^{-1}= x_1, \ \mathfrak{m} x_2 \mathfrak{m}^{-1}= x_2 x_1^{-1} \ \ \rangle. $$
Then, from the definition of $A_{F,W}$ in \eqref{oo7}, we notice
$$A_{F,W}= \tau \left(
\begin{array}{rr}
1 & -\rho( x_1 x_2^{-1} ) \\
0& \!\!\!\! 1\\
\end{array}
\right) - \left(
\begin{array}{rr} \!\!\!\! 1 & 0 \\
-\rho ( x_2 x_1^{-1} )& 1\\
\end{array}
\right)= \left(
\begin{array}{rr} \tau -1 & -\tau \rho (x_1 x_2^{-1} ) \\
\rho (x_2 x_1^{-1} ) & \tau- 1\\
\end{array}
\right) .$$
We then observe the equality,
\[ \left(
\begin{array}{rr} 1& 0\\
(\tau-1) \rho (x_1 x_2^{-1} ) & 1 \\
\end{array}
\right) \left(
\begin{array}{rr} 0& 1\\
-1 & 0 \\
\end{array}
\right)
A_{F,W} \left(
\begin{array}{rr} 1& \rho (x_1 x_2^{-1} ) (\tau-1) \\
0 & 1 \\
\end{array}
\right)\]
\[= \left(
\begin{array}{rr} -\tau \rho( x_2 x_1^{-1})& 0 \\
0 & (\tau^{-1}
-1 +\tau )\rho( x_1 x_2^{-1}) \\
\end{array}
\right) .\]
According to the Whitehead lemma (see \cite[Lemma III.1.3.3]{Wei}), any elementary matrix is zero in the $K_1$-group. Thus, elementary matrices are zeros in $K_1$, so the $K_1$-invariant turns out to be 
\[ \Delta_{\rho}^{K_1}= [\tau^{-1 } A_{F,W} ]= \tau^{-1 } \tau \rho (x_2 x_1^{-1} ) (\tau^2 -\tau +1) \rho (x_1 x_2^{-1}) = \tau -1 +\tau^{-1 } \in \mathcal{Q}_{\mathcal{A}, \kappa} . \] 
Formally, $\Delta_{\rho}^{K_1}$ does not depend on $\rho$. Since $\tau \Delta_{\rho}^{K_1} $ lies in $W(\mathcal{A},\kappa)$, we can obtain the logarithm $ \mathrm{Log}(\tau \Delta_{\rho})$, as in \S \ref{subQSS1}. Note that $ \kappa^6= \mathrm{id}_\mathcal{A}$. Therefore, if $\mathcal{A}$ is commutative (as in Example \ref{op2}), we can compute the $\tau^{6n}$-coefficient of $ \mathrm{Log} (\tau \Delta_{\rho}^{K_1} )$ as $ \tau^{6n}/3n \in \bar{P}_{6n} = \mathcal{A}/\{ a - \kappa (a)\}$, which is non-trivial. We can also verify that the $6$-fold metabelian Alexander polynomial $\Delta_{ \rho^{\rm meta}_{ 6} }^{\Upsilon }$ is computed as $(1-t^6)^2 $.

\subsection{Figure eight knot}
\label{ExaSSs2}
Next, let us consider the figure eight knot $4_1$. The Wirtinger presentation gives us the following presentation:
$$ \pi_1(S^3 \setminus 4_1) \cong \langle u,v \ | \ wv=uw \ \ \rangle $$
where $w= v^{-1}uv u^{-1}$, which can be also written as
$$ \langle \ x_1, x_2 , \mathfrak{m} \ | \ \mathfrak{m} x_1x_2 \mathfrak{m}^{-1} = x_1 , \ \mathfrak{m} x_2 x_1x_2 \mathfrak{m}^{-1} = x_2 \ \ \rangle, \ \ \ \ \mathrm{where } \ \ \ \mathfrak{m} =u^{-1}, \ x_1=w, \ x_2 =vu^{-1}.$$
According to KnotInfo, this presentation gives a monodoromy of the fibered knot $4_1$. Then, from the definition of $A_{F,W}$, we have
$$A_{F,W}= \tau \left(
\begin{array}{rr} \!\!\!\! 1 &- \rho (x_1) \\
-\rho (x_2 ) & 1+\rho (x_2 x_1) \\
\end{array}
\right) - \left(
\begin{array}{rr}
1 & 0\\
0& 1\\
\end{array}
\right)= \left(
\begin{array}{rr} \tau -1 & -\tau \rho (x_1 ) \\
-\tau \rho (x_2 ) & \tau +\tau \rho (x_2 x_1) -1 \\
\end{array}
\right) . $$
Again, let us notice the equality,
\[ \left(
\begin{array}{rr} 1& 0\\
(\tau-1) \rho (x_2^{-1} )\tau^{-1} & 1 \\
\end{array}
\right) \left(
\begin{array}{rr} 0& 1\\
-1 & 0 \\
\end{array}
\right)
A_{F,W} \left(
\begin{array}{rr} 1& -\rho (x_2^{-1} )\tau^{-1} \rho (x_1) \tau \\
0 & 1 \\
\end{array}
\right)\]
\[= \left(
\begin{array}{rr} -\tau \rho( x_2 )& 0 \\
0 & -(\tau - 1)\rho(x_2^{-1})( 1 +\rho( x_2 x_1) -\tau^{-1} ) + \tau \rho(x_1) \\
\end{array}
\right) . \]
Then, $\Delta_{\rho}^{K_1}$ in $\mathcal{Q}_{\mathcal{A}, \kappa}^{K_1}$ is computed as
\[ \Delta_{\rho}^{K_1} = [- A_{F,W} \tau^{-1 }]= (\tau - 1)\rho(x_2^{-1}) ( 1 +\rho( x_2 x_1) -\tau^{-1} ) -\tau \rho(x_1) \]
\[ = \tau^{-1 }
-\rho(x_1^{-1}x_2^{-1})-1-\rho(x_2^2 x_1x_2^{-1} ) +\rho(x_1^{-1}x_2^{-1})\tau \in \mathcal{Q}_{\mathcal{A}, \kappa}^{K_1} . \]
In this case, $\Delta_{\rho}^{K_1}$ formally depends on $\rho$ and $\tau \Delta_{\rho}^{K_1} \in W (\mathcal{A}, \kappa)$. The non-triviality can be shown as follows:
\begin{exa}\label{non.exa}
We will compute the $N$-fold metabelian Alexander polynomial with $N=2,3$. First, let $N=2$. Then, $ \mathrm{Tor}H_1(E_K^2;\Z) \cong \Z /5 \cong \langle x_1, x_2 | x_1^5, x_1^3 x_2 \rangle $ and $\kappa(x_1^n)=x_1^{-n}$. By the above computation, the Alexander polynomial $\Delta_{ \rho^{\rm meta}_{ 2} } $ is equal to
$$\Delta_{ \rho^{\rm meta}_{ 2} }^{K_1} = \tau^{-1} - ( x_1+ 1+ x^4_1) + x_1 \tau . $$
Then, for $ m\leq 3$, the $\tau^{2m}$-coefficients of $\mathrm{Log}(\tau \Delta_{ \rho^{\rm meta}_{ 2} }^{K_1} )$ in $\Q[x_1] /\{x_1^5, x- \kappa (x) \} =\Q[x_1]/(x_1^5) / \{ x_1=x_1^4, x_1^2 =x_1^3\} $ are as follows:
\[\mathrm{Log}_2(\tau \Delta_{ \rho^{\rm meta}_{ 2} }^{K_1} )= (3 + 2 x + x^2 + x^3 + 3 x^4 )/2,\]
\[\mathrm{Log}_4(\tau \Delta_{ \rho^{\rm meta}_{ 2} }^{K_1} )= (21 + 18 x + 15 x^2 + 17 x^3 + 20 x^4)/4 ,\]
\[\mathrm{Log}_6(\tau \Delta_{ \rho^{\rm meta}_{ 2} }^{K_1} )= (171 + 163 x + 157 x^2 + 160 x^3 + 169 x^4 )/6.\]
Moreover, we can verify that the metabelian polynomial $\Delta_{\rho^{\rm meta}_{ 2} }^{K_1} $ is equal to 
$$ t^{-2} - 3 - x_1 - x_1^2 - x_1^3 - x_1^4 + t^2\in \Q[x_1]/(x_1^5) [t,t^{-1}]. $$

Next, let $N=3$. Then, $ \mathrm{Tor}H_1(E_K^3;\Z) \cong \Z/4 x \oplus \Z/4 y$ and $\kappa(x^ay^b )= x^{2a-b}y^{-a+b}$. Similarly, we can obtain 
\begin{eqnarray}
\mathrm{Log}_3(\tau \Delta_{ \rho^{\rm meta}_{ 3} }^{K_1} )= & (3 + 3 x + 2 x^2 + 2 x^3 + y + 2 x y + 3 x^2 y + x^3 y + 2 y^2 +\notag \\
& x y^2 + 2 x^2 y^2 + 2 x^3 y^2 + 2 y^3 + x y^3 + 3 x^2 y^3 + 3 x^3 y^3)/3 , \notag \label{oo9}
\end{eqnarray}
\begin{eqnarray}
\mathrm{Log}_6(\tau \Delta_{ \rho^{\rm meta}_{ 3 } }^{K_1} )=& (81 + 77 x + 71 x^2 + 73 x^3 + 68 y + 74 x y + 77 x^2 y + 71 x^3 y +74 y^2 + \notag \\
& 71 x y^2 +72 x^2 y^2 + 76 x^3 y^2 + 77 y^3 + 76 x y^3 + 75 x^2 y^3 + 76 x^3 y^3)/6. \notag
\end{eqnarray}
Furthermore, the metabelian polynomial $\Delta_{\rho^{\rm meta}_{ 3} }^{\Upsilon} $ is given by
$$ 1 + 4 t^3 + 7 t^3 x y + 3 t^3 x^2 y^2 + 4 t^3 x^3 y^3 + t^6 x y . $$
While computing $ \mathrm{Log}_N(\tau \Delta_{ \rho^{\rm meta}_{ /p^m} }^{\Upsilon} )$ with $ \kappa^N = \mathrm{id}_{\mathcal{A}}$ for other $N$ is feasible, the resulting computations are long, so they have been omitted.
\end{exa}

\subsection{The $5_2$-knot}
\label{ex52}
Next, let us consider the $5_2$-knot, which is not fibered. In private discussions, H. Goda stated that from the algorithm in \cite{GS}, the knot group can be presented as
$$ \langle \ x_1, x_2 , \mathfrak{m} \ | \ \mathfrak{m} x_1^{-2} \mathfrak{m}^{-1} = x_2 x_1^{-2} , \ \mathfrak{m} x_1^{-1} x_2 \mathfrak{m}^{-1} =   x_2 \ \ \rangle.$$
Likewise, from the definition of $A_{F,W}$, we have the following:
$$A_{F,W}= \left(
\begin{array}{rr} \tau \rho (-x_1^{-1}- x_1^{-2}) +\rho( x_2x_1^{-1}+ x_2x_1^{-2})& 1 \ \ \ \\
- \tau \rho (x_1 ^{-1}) \ \ \ \ \ \ \ \ \ \ \ \ \ \ \ \ \ \ \ \ \ \ \ \ & \tau- \rho (x_1)\\
\end{array}
\right) . $$
Then, by using elementary transformations, we can compute $\Delta_{\rho}^{K_1}$ as
\[ A_{F,W} \tau^{-1 }=\Bigl(
-\tau \rho(x_1)+ \bigl(\tau \rho (-x_1^{-1}- x_1^{-2}) +\rho(x_2x_1^{-1}+ x_2x_1^{-2}) \bigr)(\tau - \rho(x_1))\Bigl)\tau^{-1 } \]
\begin{exa}\label{non.exa2} 
Similarly to Example \ref{non.exa}, we will compute the $3$-fold metabelian Alexander polynomial. Let us denote the classes of the generator $x_1, x_2 $ by $x, y$, respectively. Notice that $\mathrm{Tor}H_1(E_L^3;\Z) \cong \Z/5 x \oplus \Z/5 y$ and $\kappa(x^ay^b )= x^{a+b } y^{a/2 -b/2}$. The metabelian Alexander polynomial $\Delta_{ \rho^{\rm meta}_{3} }^{K_1} $ can be computed as
$$ \Delta_{ \rho^{\rm meta}_{ 3} }^{K_1} = (yx^{-1}+ y)\tau^{-1} - ( x^{-1}y+ x^{-2}y-xy^{-1/2 }+1 +x^{-1}y^{1/2}) +( x^{-1}y^{1/2}+x^{-2} y ) \tau . $$
\end{exa}

\section{The proofs of theorems}
\label{appeSS}
We first give the proof of Theorem \ref{thm11} (the invariance of the $K_1$-class $\Delta_{\rho}^{K_1}$), which is outlined in the proof of the main theorem \cite{Lin}. 
While his idea is pioneering, there are mistakes and gaps in his proof, such as in the chain rule of the Fox derivative and many of the derivatives variables. For this reason, we will give detailed proofs. 

\subsection{Invariance from the choice of the generator $x_1,\dots, x_{2g}$}
\label{InvHandle2}
We will show the invariance with respect to the choice of the generator $x_1,\dots, x_{2g}$. We select another basis for the free group $\pi_1(S^3 \setminus F)$, e.g., $x_1', \dots, x_{2g}'$. We use $ \Delta_{\rho}'$ to denote the associated $K_1$-class defined from $x_1',\dots, x_{2g}'$.

In the computations below, $\rho (x)$ is denoted by $x^{\rho}$.

Let us review the chain rule for the Fox derivative and a result of Birman \cite{Bir}. The chain rule \cite[(2.6)]{Fox} gives us the following:
$$ \frac{\partial y_j }{ \partial x_i' } = \sum_{k: 1 \leq k \leq 2g } \Bigl( \frac{\partial y_j}{ \partial x_k }\Bigr) \Bigl( \frac{\partial x_k}{ \partial x_i'} \Bigr), \ \ \ \ \ \ \ \ \ \frac{\partial z_j }{ \partial x_i' }= \sum_{k: 1 \leq k \leq 2g } \Bigl( \frac{\partial z_j}{ \partial x_k }\Bigr) \Bigl( \frac{\partial x_k}{ \partial x_i'} \Bigr).$$
Since the correspondence $\lambda : \{ x_1, \dots, x_{2g}\} \mapsto \{ x_1', \dots, x_{2g}'\}$ is a base change, by the implicit function theorem \cite{Bir}, the Jacobi matrix of entry $\{ \frac{\partial x_k'}{ \partial x_i } \}_{ 1 \leq k,i \leq 2g} $ admits an inverse matrix, which is of the form $\{ \frac{\partial x_k}{ \partial x_i' } \}_{ 1 \leq k,i \leq 2g} $, by using the inverse mapping $\lambda^{-1}$.

The following shows that $ \Delta_{\rho } = \Delta_{\rho}'$ in $\mathcal{Q}_{\mathcal{A}, \kappa} $.
\begin{eqnarray}
\Delta_{\rho}'&=& \tau^{-g} \Bigl\{ \tau \Bigl( \frac{\partial y_j }{\partial x_i '} \Bigr)^{\rho}-\Bigl( \frac{\partial z_j }{\partial x_i '} \Bigr)^{\rho} \Bigr\}_{ 1 \leq i,j \leq 2g} \notag \\
&= & \tau^{-g} \Bigl\{ \displaystyle{ \sum_{1 \leq k \leq 2g}} \tau \Bigl( \frac{\partial y_j}{ \partial x_k }\Bigr)^{\rho } \Bigl( \frac{\partial x_k}{ \partial x_i'} \Bigr)^{\rho }-\Bigl( \frac{\partial z_j}{ \partial x_k }\Bigr)^{\rho } \Bigl( \frac{\partial x_k}{ \partial x_i'} \Bigr)^{\rho } \Bigr\}_{ 1 \leq i,j \leq 2g} \notag \\
&= & \tau^{-g} \Bigl\{ \tau \Bigl( \frac{\partial y_j }{\partial x_i } \Bigr)^{\rho}-\Bigl( \frac{\partial z_j }{\partial x_i } \Bigr)^{\rho} \Bigr\}_{ 1 \leq i,j \leq 2g} \cdot \Bigl\{ \Bigl( \frac{\partial x_j }{\partial x_i' } \Bigr)^{\rho} \Bigr\}_{ 1 \leq i,j \leq 2g} \notag \\
&= &\Delta_{\rho}^{K_1} \cdot \Bigl\{ \Bigl( \frac{\partial x_j }{\partial x_i' } \Bigr)^{\rho} \Bigr\}_{ 1 \leq i,j \leq 2g} \in K_1( \mathcal{A}_{\kappa}( \!( \tau) \!)) \notag \label{oo1}
\end{eqnarray}
Notice that the last matrix is invertible and lies in $K_1( \mathcal{A}) $. Thus $\Delta_{\rho}' $ is equal to $\Delta_{\rho}^{K_1} $ in $ \mathcal{Q}_{\mathcal{A},\kappa}$.

\subsection{Invariance from the choice of spine $W$}
\label{InvHandle4}
We show the independence of the choice of spine $W$. Let us consider another spine $W$, i.e., choose a basis of $\pi_1( F)$, such as $u_1', \dots, u_{2g}'$. The associated $K_1$-class is denoted by $ \Delta_{\rho}'$.

In a similar way to \S \ref{ganeaS}, we have words $ y_i'$ and $z_i'$ of $ x_1, \dots, x_{2g}$, and the group isomorphism:
\begin{equation}\label{jjj2} \pi_1(S^3 \setminus K) \cong \langle x_1, \dots , x_{2g}, \mathfrak{m} \ | \ \mathfrak{m} y_i' \mathfrak{m}^{-1}= z_i' \ \ \ \ i \in \{ 1, \dots , 2g\} \rangle .\end{equation}
From the definition of $y_i$, the words $y_1,\dots, y_{2g}$ have independence; that is, the free subgroup generated by $y_1,\dots, y_{2g}$ has a basis $y_1,\dots, y_{2g}$. Hence, we can define the derivative $\partial / \partial y_i$ as well as the derivative $\partial / \partial z_i$. Furthermore, from the definitions of $ u_i^{\sharp}, u_i^{\flat }$ (see Figure \ref{ftft}), the correspondences
$$ \lambda_y: \{ y_1,\dots, y_{2g}\} \lra \{ y_1',\dots, y_{2g}'\} , \ \ \ \ \lambda_z: \{ z_1,\dots, z_{2g}\} \longmapsto \{ z_1',\dots, z_{2g}'\} $$
are the same. By letting $ F_{yz}:\{ y_1,\dots, y_{2g}\} \ra \{ z_1,\dots, z_{2g}\} $ be the group isomorphism that sends $ y_i$ to $z_i$, we can obtain $F_{yz}' \circ \lambda_y= \lambda_z \circ F_{yz}$. Therefore, by the definition of the Fox derivatives, we have
$ \displaystyle{\frac{\partial z_j' }{\partial z_i} = F_{yz} \Bigl( \frac{\partial y_j' }{\partial y_i} \Bigr) } $ for $1 \leq i,j \leq 2g. $ Since
$\tau \rho (y_i) \tau^{-1} =\rho (z_i) $ by the presentation \eqref{jjj}, we have
\begin{equation}\label{jjj3} \tau \Bigl( \frac{\partial y_j' }{ \partial y_i } \Bigr)^{\rho } \tau^{-1} =\Bigl( \frac{\partial z_j' }{ \partial z_i } \Bigr)^{\rho } \in \mathcal{A}_{\kappa}( \!( \tau) \!), \ \ \ \ \ \mathrm{for} \ \ \ 1 \leq i,j \leq 2g.\end{equation}
Furthermore, we can observe the following from the chain rule:
\begin{equation}\label{jjj4} \Bigl( \frac{\partial y_j' }{ \partial x_i} \Bigr)^{\rho } = \sum_{k: 1 \leq k \leq 2g} \Bigl( \frac{\partial y_j'}{ \partial y_k }\Bigr)^{\rho } \Bigl( \frac{\partial y_k}{ \partial x_i} \Bigr)^{\rho }, \ \ \ \ \ \ \ \ \ \Bigl( \frac{\partial z_j '}{ \partial x_i } \Bigr)^{\rho } = \sum_{k: 1 \leq k \leq 2g} \Bigl( \frac{\partial z_j '}{ \partial z_k }\Bigr)^{\rho } \Bigl( \frac{\partial z_k}{ \partial x_i } \Bigr)^{\rho } .\end{equation}
To summarize, we can now compute $\Delta_{\rho}'$ in $ K_1( \mathcal{A}_{\kappa}( \!( \tau) \!))$ as
\begin{eqnarray}
& \tau^{-g} \Bigl\{ \tau \Bigl( \frac{\partial y_j '}{\partial x_i} \Bigr)^{\rho}-\Bigl( \frac{\partial z_j '}{\partial x_i } \Bigr)^{\rho} \Bigr\}_{ 1 \leq i,j \leq 2g} \notag \\
= & \tau^{-g} \Bigl\{ \displaystyle{ \sum_{1 \leq k \leq 2g}} \tau \Bigl( \frac{\partial y_j' }{ \partial y_k }\Bigr)^{\rho } \Bigl( \frac{\partial y_k}{ \partial x_i} \Bigr)^{\rho }-\Bigl( \frac{\partial z_j'}{ \partial z_k }\Bigr)^{\rho } \Bigl( \frac{\partial z_k}{ \partial x_i} \Bigr)^{\rho } \Bigr\}_{ 1 \leq i,j \leq 2g} \label{oo7} \\
= & \tau^{-g} \Bigl\{ \displaystyle{ \sum_{1 \leq k \leq 2g}} \tau \Bigl( \frac{\partial y_j' }{ \partial y_k }\Bigr)^{\rho } \Bigl( \frac{\partial y_k}{ \partial x_i} \Bigr)^{\rho }- \tau \Bigl( \frac{\partial y_j' }{ \partial y_k } \Bigr)^{\rho } \tau^{-1} \Bigl( \frac{\partial z_k}{ \partial x_i} \Bigr)^{\rho } \Bigr\}_{ 1 \leq i,j \leq 2g} \label{oo8} \\
= & \tau^{-g} \Bigl\{ \tau \Bigl( \frac{\partial y_j' }{\partial y_i } \Bigr)^{\rho} \tau^{-1} \Bigr\}_{ 1 \leq i,j \leq 2g} \cdot \Bigl\{ \tau \Bigl( \frac{\partial y_j }{\partial x_i } \Bigr)^{\rho}-\Bigl( \frac{\partial z_j }{\partial x_i } \Bigr)^{\rho} \Bigr\}_{ 1 \leq i,j \leq 2g} \notag \\
= & \Bigl\{ \tau \Bigl( \frac{\partial y_j ' }{\partial y_i } \Bigr)^{\rho} \tau^{-1} \Bigr\}_{ 1 \leq i,j \leq 2g} \cdot \Delta_{\rho}^{K_1} \in K_1( \mathcal{A}_{\kappa}( \!( \tau) \!)) , \notag \label{oo9}
\end{eqnarray}
where \eqref{oo7} follows from \eqref{jjj4} and \eqref{oo8} follows from \eqref{jjj3}. Hence $\Delta_{\rho}^{K_1} =\Delta_{\rho}'$ in $ \mathcal{Q}_{\mathcal{A}, \kappa} $ as required.

\subsection{Invariance from the Seifert surface}
\label{InvHandle}
Finally, we will show the invariance from the Seifert surface. First, let us recall Theorem 1.7 of \cite{Lin} (cf. \cite[Chapter 8]{Lic}). Let $F$ be a regular Seifert surface with a spine $W$. Let $\alpha $ be an oriented arc in $S^3$ such that $\alpha \cap F = \partial \alpha $ and the intersection is composed of the orientations of $S$ and $\alpha $. If we add a tube to $F$ along $\alpha$, we have another Seifert surface $F'$. This construction of $F'$ obtained from $F$ and $\alpha$ is called a {\it handle addition}. A handle addition to $F$ is said to be {\it regular} if $W \cup \alpha \subset S^3 $ is isotopic to the standard embedding. The inverse construction of a regular handle addition is called a {\it regular handle subtraction}. Let $F$ and $F'$ be two regular Seifert surfaces of the knot $K$. The surfaces are {\it regularly $S$-equivalent} if there exists a sequence of regular Seifert surfaces
$$ F = F_1, F_2, \dots , F_{m-1}, F_m = F'$$
of $K$ such that $F_{i+1}$ is obtained from $F_i$ by either a regular handle subtraction or a regular handle addition. It is shown in \cite[Theorem 1.7]{Lin} that any two regular Seifert surfaces of a knot are regularly $S$-equivalent. Therefore, if we obtain an invariant from a regular Seifert surface, which is invariant with respect to regularly $S$-equivalence, then it is a knot invariant of $K$.

\begin{figure}[htpb]
\begin{center}
\begin{picture}(-60,60)
\put(-191,2){\pc{spine9}{0.33104}}
\put(-158,38){\large $F $}
\put(-8,38){\large $F' $}
\put(4,15){\large $a_{g+1} $}
\put(45,7){\large $b_{g+1} $}
\put(-65,23){\large $-\!\!\!\! -\!\!\!\! - \!\!\!\! \lra $}

\put(-1,2){\pc{spine8}{0.33104}}
\end{picture}
\end{center}
\caption{\label{ftft2} A handle attaching; the resulting surface $F'$ and the loops $a_{g+1}$ and $b_{g+1}$. }
\end{figure}
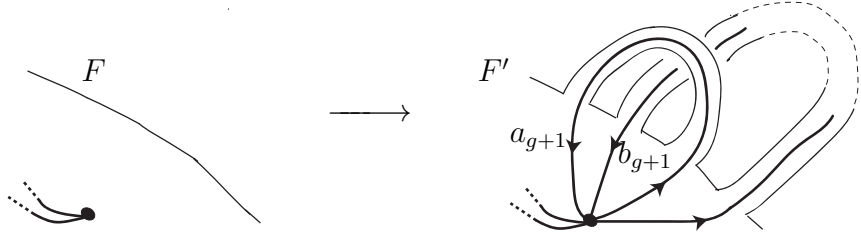
Next, we will review Lemma \ref{lem12}. Let $F'$ be the new regular Seifert surface obtained from a regular handle addition. Fix circles $a_{g+1}$ and $b_{g+1}$ along the handle (as depicted in Figure \ref{ftft2}) such that the union of the spine $W$, $a_{g+1}$ and $b_{g+1}$ yields a regular spine $\bar{W}$ for $F'$. Use $x_1, \dots, x_{2g+2}$ to denote the associated generator of $\pi_1(S^3 \setminus F')$ with $x_j =x_j$ for $ 1\leq j \leq 2g$. Similarly to \eqref{jjj}, there are words $\bar{y}_1, \dots, \bar{y}_{2g+2}, \bar{z}_{1},\dots, \bar{z}_{2g+2}$ in $ x_1, \dots , x_{2g+2}$ such that $\pi_1(S^3 \setminus K)$ has the presentation
\begin{equation}\label{jjj5} \langle x_1, \dots , x_{2g+2}, \mathfrak{m} \ | \ \mathfrak{m} \bar{y}_i \mathfrak{m} ^{-1}= \bar{z}_i \ \ \ \ i \in \{ 1, \dots , 2g+2\} \ \rangle . \notag \end{equation}
Lin \cite{Lin} showed the properties of the words $\bar{y}_{2g+1}, \bar{y}_{2g+2}, \bar{z}_{2g+1}, \bar{z}_{2g+2}$:
\begin{lem}[{\cite[Lemma 2.4]{Lin}}]\label{lem12} 
For $i = 1, \dots, 2g$, $\bar{y}_i $ and $\bar{z_i}$ are words in $x_1, \dots, x_{2g}, x_{2g+1}$. They are reduced to $ y_i$ and $z_i$ respectively when we set $x_{2g+1} = 1$. Furthermore, $v$ and $w$ are words in $x_1,\dots, x_{2g}$ such that $\bar{y}_{2g+1} = v x_{2g+2}, \bar{y}_{2g+2} = 1, \bar{y}_{2g+1} = w $, and $\bar{z}_{2g+2} = x_{2g+1}$.
\end{lem}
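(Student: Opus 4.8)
The plan is to prove Lemma~\ref{lem12} by a careful local analysis of the regular handle addition, exploiting the fact that $F$ and $F'$ differ only inside a small ball. First I would fix a ball $B\subset S^3$ containing the arc $\alpha$ together with the two feet of the attached tube $T$, chosen so that $W\cap B$ is the relevant sub-arc of the spine, so that $W\cup\alpha$ sits inside $B$ in the standard (unknotted) position guaranteed by regularity, and so that outside $B$ the surfaces $F$ and $F'$ coincide, along with the spines $W$ and $\bar{W}\setminus(a_{g+1}\cup b_{g+1})$. Inside $B$ I would write down the standard model of the genus-one handle: the tube $T$, its cocore, the new surface curves $a_{g+1},b_{g+1}\subset F'$, and the two new complementary generators $x_{2g+1},x_{2g+2}$ of $\pi_1(S^3\setminus N(F'))$ localized at the handle, where $x_{2g+1}$ is a meridian of the core of $T$; the old generators $x_1,\dots,x_{2g}$ of $\pi_1(S^3\setminus N(F))$ are carried over unchanged because they can be isotoped to lie away from $B$.

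Next I would read off the words $\bar{y}_i,\bar{z}_i$ from the van Kampen construction below~\eqref{jjj} applied to $F'$. For $1\le i\le 2g$, the loop $u_i$ is isotopic rel basepoint to one meeting $B$ in controlled position; its pushoffs $u_i^{\flat}$ and $u_i^{\sharp}$ agree with the old ones outside $B$ and, inside $B$, are altered only by sliding around the new tube, which inserts at most powers of the tube meridian $x_{2g+1}$; since $u_i$ is disjoint from a neighborhood of the cocore of the handle, no $x_{2g+2}$ can occur, and setting $x_{2g+1}=1$ collapses the tube back onto $F$ and recovers the original pushoffs, so $\bar{y}_i\mapsto y_i$ and $\bar{z}_i\mapsto z_i$. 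For the two new surface generators I would compute directly in the standard model. The generator $b_{g+1}$ bounds an obvious disk lying to one side of $F'$, so its pushoff to that side is null-homotopic in the complement, giving $\bar{y}_{2g+2}=1$, while its pushoff to the other side is a meridian loop of the tube core, giving $\bar{z}_{2g+2}=x_{2g+1}$. The generator $a_{g+1}$ runs through the tube once and closes up along a path in the old part of $F$; pushing it below, the passage through the tube contributes the new generator $x_{2g+2}$ to the first power and the return path contributes a word $v$ in $x_1,\dots,x_{2g}$, giving $\bar{y}_{2g+1}=vx_{2g+2}$, while pushing it above yields a word $w$ in $x_1,\dots,x_{2g}$, i.e.\ $\bar{z}_{2g+1}=w$ (the displayed ``$\bar{y}_{2g+1}=w$'' in the statement should read $\bar{z}_{2g+1}=w$).

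Finally I would confirm consistency with the relations $\mathfrak{m}\bar{y}_i\mathfrak{m}^{-1}=\bar{z}_i$: for $i=2g+2$ this reduces to $x_{2g+1}=1$ and for $i=2g+1$ it solves for $x_{2g+2}$ in terms of the old generators and $\mathfrak{m}$, so the two new generators are eliminated by the two new relators and~\eqref{jjj5} presents the same group as~\eqref{jjj}. The main obstacle I expect is bookkeeping rather than anything conceptual: pinning down basepoints and the isotopy putting $W\cup\alpha$ in standard position so that ``unchanged outside $B$'' is literally correct, and getting the over/under and orientation conventions right so that $\bar{z}_{2g+2}$ comes out as $x_{2g+1}$ rather than its inverse and so that $\bar{y}_{2g+1}$ contains the new generator exactly to the first power. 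A clean way to organize the whole argument is to phrase it via the Heegaard-type splitting $U'\cup V'$ of \S\ref{ganeaS}, following \cite{Lin}: the handle addition modifies $U\cap V$ by attaching a standard $1$-handle, so $\pi_1(U'\cap V')$ picks up the free generators $a_{g+1}^{\sharp},a_{g+1}^{\flat},b_{g+1}^{\sharp},b_{g+1}^{\flat}$, and Lemma~\ref{lem12} becomes a direct comparison of the canonical generating sets of $\pi_1(U')$ and $\pi_1(V')$ before and after the addition.
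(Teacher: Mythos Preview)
The paper does not supply its own proof of this lemma: it is quoted directly from \cite[Lemma~2.4]{Lin} and then used without further argument. So there is no ``paper's proof'' to compare against here; your proposal is essentially a reconstruction of Lin's original argument, and the local analysis you outline (isolate the handle in a ball, identify $x_{2g+1}$ as the tube meridian and $x_{2g+2}$ as the cocore dual, then read off the pushoffs of $u_i$, $a_{g+1}$, $b_{g+1}$ in the standard model) is the correct and standard way to establish it. You are also right that the displayed ``$\bar{y}_{2g+1}=w$'' is a typo for $\bar{z}_{2g+1}=w$.

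One small point worth tightening: in your consistency check you write that the relation for $i=2g+2$ ``reduces to $x_{2g+1}=1$''. That is not quite what happens in the group presentation; the relation $\mathfrak{m}\bar{y}_{2g+2}\mathfrak{m}^{-1}=\bar{z}_{2g+2}$ reads $\mathfrak{m}\cdot 1\cdot\mathfrak{m}^{-1}=x_{2g+1}$, i.e.\ it \emph{imposes} $x_{2g+1}=1$ as a relation, which then lets the $i\le 2g$ relations collapse back to the old ones. Phrased that way the elimination of the two new generators is clean and matches the use the paper makes of the lemma.
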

In particular, we have $\Bigl( \frac{\partial \bar{z}_{2g+2} }{ \partial x_{2g+1} } \Bigr)^{\rho}= 1 $ and $\Bigl( \frac{\partial \bar{y}_{2g+1} }{ \partial x_{2g+2} } \Bigr)^{\rho}= \rho (v) .$ Therefore,
\[\Bigl\{ \Bigl( \frac{\partial \bar{y}_j}{ \partial x_i } \Bigr)^{\rho}\Bigr\}_{1 \leq i,j \leq 2g+2}= {\small \left(
\begin{array}{rrr}\Bigl\{\Bigl(
\frac{\partial \bar{y}_j}{ \partial x_i } \Bigr)^{\rho}\Bigr\}_{1 \leq i,j \leq 2g} & * & O \\
* \ \ \ \ \ \ \ \ \ \ & 0 & \rho(v)\\
O \ \ \ \ \ \ \ \ \ \ & 0 &0 \\
\end{array}
\right)}= {\small \left(
\begin{array}{rrr}\Bigl\{\Bigl(
\frac{\partial y_j}{ \partial x_i } \Bigr)^{\rho} \Bigr\}_{1 \leq i,j \leq 2g} & * & O \\
* \ \ \ \ \ \ \ \ \ \ & 0 & \rho(v)\\
O \ \ \ \ \ \ \ \ \ \ & 0 &0 \\
\end{array}
\right)}, \]
\[ \Bigl\{ \Bigl( \frac{\partial \bar{z}_j}{ \partial x_i } \Bigr)^{\rho}\Bigr\}_{1 \leq i,j \leq 2g+2}= {\small \left(
\begin{array}{rrr}\Bigl\{\Bigl(
\frac{\partial \bar{z}_j}{ \partial x_i } \Bigr)^{\rho}_{1 \leq i,j \leq 2g} \Bigr\} & * & O \\
* \ \ \ \ \ \ & 0 & 0\\
O \ \ \ \ \ \ & 1 &0 \\
\end{array}
\right)}= {\small \left(
\begin{array}{rrr}\Bigl\{\Bigl(
\frac{\partial z_j}{ \partial x_i } \Bigr)^{\rho}\Bigr\}_{1 \leq i,j \leq 2g} & * & O \\
* \ \ \ \ \ \ & 0 & 0\\
O \ \ \ \ \ \ & 1 &0 \\
\end{array}
\right)}. \]

To summarize, let us compute $\overline{\Delta_{\rho}}$ in $K_1( \mathcal{A}_{\kappa}( \!( \tau) \!) $ as
\begin{eqnarray}
& \tau^{-g-1} \Bigl( \Bigl\{ \tau \Bigl( \frac{\partial \bar{y}_j}{\partial x_i} \Bigr)^{\rho}-\Bigl( \frac{\partial \bar{z}_j}{\partial x_i } \Bigr)^{\rho} \Bigr\}_{ 1 \leq i,j \leq 2g} \Bigr) \notag \\
= & \tau^{-g-1} \Bigl( \tau {\small \left(
\begin{array}{rrr}\Bigl\{\Bigl(
\frac{\partial z_j}{ \partial x_i } \Bigr)^{\rho}\Bigr\}_{1 \leq i,j \leq 2g} & * & O \\
* \ \ \ \ \ \ \ \ \ \ & 0 & \rho(v)\\
O \ \ \ \ \ \ \ \ \ \ & 0 &0 \\
\end{array}
\right)} - {\small \left(
\begin{array}{rrr}\Bigl\{\Bigl(
\frac{\partial z_j}{ \partial x_i } \Bigr)^{\rho} \Bigr\}_{1 \leq i,j \leq 2g} & * & O \\
* \ \ \ \ \ \ \ \ \ \ \ \ \ \ & 0 & 0\\
O \ \ \ \ \ \ \ \ \ \ \ \ \ \ & 1 &0 \\
\end{array}
\right)} \Bigr) \notag \\
= & \tau^{-g-1} {\small \left(
\begin{array}{rrr} \Bigl\{ \tau
\Bigl(\frac{ \partial y_j}{ \partial x_i } \Bigr)^{\rho} - \Bigl(\frac{\partial z_j}{ \partial x_i } \Bigr)^{\rho} \Bigr\}_{1 \leq i,j \leq 2g} & * & O \\
* \ \ \ \ \ \ \ \ \ \ \ \ \ \ \ \ \ \ & 0 & \tau \rho(v)\\
O \ \ \ \ \ \ \ \ \ \ \ \ \ \ \ \ \ \ & -1 &0 \\
\end{array}
\right)}. \label{oo18}
\end{eqnarray}
Since $-1$ and $\tau \rho(v) $ lay in $ \mathcal{A}^{\times}$, a Gaussian elimination shows that this matrix is equivalent to the following matrix:
$$ \mathcal{D}:= \tau^{-g-1} {\small \left(
\begin{array}{rrr} \Bigl\{ \tau
\Bigl(\frac{ \partial y_j}{ \partial x_i } \Bigr)^{\rho} - \Bigl(\frac{\partial z_j}{ \partial x_i } \Bigr)^{\rho} \Bigr\}_{1 \leq i,j \leq 2g} & O & O \\
O \ \ \ \ \ \ \ \ \ \ \ \ \ \ \ \ \ \ & \tau \rho(v) & 0\\
O \ \ \ \ \ \ \ \ \ \ \ \ \ \ \ \ \ \ & 0 & -1 \\
\end{array}
\right)}. $$
Namely, there exist elementary matrices $A_1 , \dots, A_k$ and $B_1, \dots, B_\ell$ for which $\overline{\Delta_{\rho}} =A_1\cdots A_k \mathcal{D} B_1 \cdots B_\ell $. Since any elementary matrix is zero in the $K_1$-group by Whitehead lemma, we have
$$\overline{\Delta_{\rho}} = - \tau \rho(v) \tau^{-g-1} \Bigl\{ \tau
\Bigl(\frac{ \partial y_j}{ \partial x_i } \Bigr)^{\rho} - \Bigl(\frac{\partial z_j}{ \partial x_i } \Bigr)^{\rho} \Bigr\}_{1 \leq i,j \leq 2g} =\rho(v) \Delta_{\rho}^{K_1} \in K_1( \mathcal{A}_{\kappa}( \!( \tau) \!)). $$
Notice $\rho (v) \in \mathcal{A}^{\times }$ by Assumption $(\dagger)$. Hence $\Delta_{\rho}^{K_1}=\overline{\Delta_{\rho} }$ in $ Q^{\times}_{\rm ab} $ as required.

\subsection{The proofs of Propositions \ref{ww} and \ref{ww3}}
\label{wwS}
Let $ \mathfrak{m}'$ be another meridian. Since any two meridians are conjugate, $h \in \pi_1(S^3 \setminus K)$ such that $ \mathfrak{m}' =h \mathfrak{m}h^{-1} $. Consider other generators $x_i', y_i',$ and $z_i' $ defined by $ h x_i h^{-1}, h y_i h^{-1}$, and $h z_i h^{-1}$, respectively. Then, we have the presentation for $ \pi_1(S^3 \setminus K)$ of the formula:
\begin{equation}\label{jjj5} \langle x_1', \dots , x_{2g}', \mathfrak{m}' \ | \ \mathfrak{m}' y_i' (\mathfrak{m}') ^{-1}= z_i' \ \ \ \ i \in \{ 1, \dots , 2g\} \ \rangle . \notag \end{equation}
As per our definitions, we can observe
$$ \Bigl(\frac{ \partial y_j'}{ \partial x_i' } \Bigr)^{\rho}=
\rho(h) \Bigl(\frac{ \partial y_j}{ \partial x_i } \Bigr)^{\rho} \rho(h^{-1}), \ \ \ \ \ \ \ \ \ \ \Bigl(\frac{\partial z_j '}{ \partial x_i' } \Bigr)^{\rho}= \rho(h ) \Bigl(\frac{\partial z_j}{ \partial x_i } \Bigr)^{\rho} \rho(h^{-1})$$
Take a ring isomorphism $ \mathcal{T} : \mathcal{A}_{\kappa'}( \!( \tau') \!) \ra \mathcal{A}_{\kappa}( \!( \tau) \!)) $ which sends $ a(\tau')^m$ to $ \rho(h^{m }) a \tau^m \rho(h^{-m })$. Hence, we can compute $ \mathcal{T} ( \Delta_{\rho}') $ as
$$ \mathcal{T} ( \Bigl\{ \tau '
\Bigl(\frac{ \partial y_j'}{ \partial x_i '} \Bigr)^{\rho} )- \Bigl(\frac{\partial z_j '}{ \partial x_i ' } \Bigr)^{\rho} \Bigr\}_{1 \leq i,j \leq 2g} ) = \rho (h)
\Bigl\{ \tau
\Bigl(\frac{ \partial y_j}{ \partial x_i } \Bigr)^{\rho} - \Bigl(\frac{\partial z_j}{ \partial x_i } \Bigr)^{\rho} \Bigr\}_{1 \leq i,j \leq 2g} \rho (h^{-1}).
$$
\begin{proof}[Proof of Proposition \ref{ww}]
Denote the $\tau^k$-coefficient of $ \mathrm{Log}$ by $ \mathrm{Log}_k$. By definition, we recognize $\mathrm{Log}'= \mathrm{Log} \circ \mathcal{T} $. After applying the logarithm, we have
$$ \mathrm{Log}_k( (\tau')^g \Delta_{\rho}') \rho (h)^k = \rho (h)^k \mathrm{Log}_k ( \tau^g \Delta_{\rho}) \in P_k.$$
From the definition of $ \bar{P}_k$, we can conclude $ \mathrm{Log}_k( (\tau')^g \Delta_{\rho}') = \mathrm{Log}_k( \tau^g \Delta_{\rho})\in \bar{P}_k $ as required.
\end{proof}
\begin{proof}[Proof of Proposition \ref{ww3}]
By definition, we recognize $ \Upsilon_*' ( \Delta_{\rho}' ) = \Upsilon ( \rho (h) \Delta_{\rho}^{K_1} \rho (h^{-1})) $ as a matrix. Since $\Upsilon$ is a ring homomorphism, $ [\Upsilon_*' ( \Delta_{\rho}' ) ]=[\Upsilon_* ( \Delta_{\rho} ]$ in $K_1(\mathcal{A}_{\rm id}( \!( t ) \!)) )$ as required.
\end{proof}

\subsection{The proofs of Theorems \ref{thm119} and \ref{ww2}}
\label{wwS4}
For the proofs, we first need to establish terminology. For $r \in \mathcal{A}_{\kappa}( \!( \tau) \!)$ and $i \neq j$, $1 \leq i,j \leq n$, we define the elementary matrix, $e_{ij} (r)$, to be the $(n \times n )$-matrix with 1's on the diagonal, $r$ in the $(i,j)$-slot, and $0$'s elsewhere. Recall from \eqref{jjj775588} the multiplicative subgroup $ W(\mathcal{A}, \kappa ) \subset \mathcal{A}_{\kappa}( \!( \tau) \!)^{\times}$. For $ k \leq 2g, $ a matrix $S \in \mathrm{Mat}(2g \times 2g, \mathcal{A}_{\kappa}( \!( \tau) \!))$ is said to satisfy $ (*)_k$ if the following holds:

\

$(*)_k$: for any $i,j$ with $i \neq j$, the $(i,i)$-th entry of $S$ lies in $ W(\mathcal{A}, \kappa ) $ and the $(i,j)$-th entry of $S$ forms $ \tau \mu$ for some $ \mu \in \mathcal{A}_{\kappa} [\![ \tau ]\!] $ . In addition, for any $(i,j)$-th entry of $S$ is zero if $ i < k, j < k, \ i \neq j$.

\

If a matrix $S_k$ satisfies $ (*)_k$ and $a_{ij}$ denotes the $(i,j)$-th entry of $S_k$, let us define $S_{k+1}'$ and $S_{k+1}$ to be
\[ S_{k+1}' := e_{k k+1}(- a_{k k+1} a_{kk}^{-1}) e_{k k+2}(- a_{k k+2} a_{kk}^{-1} ) \cdots e_{k n}(- a_{k n} a_{kn}^{-1})S_k, \]
\[ S_{k+1} := S_{k+1}' e_{k+1, k}( - a_{kk}^{-1} a_{k+1 , k} ) e_{k+2, k}(- a_{kk}^{-1} a_{k+1 , k} ) \cdots e_{n k }(- a_{kk}^{-1} a_{n , k}).\]
Then, we can easily verify that $ S_{k+1}$ satisfies $ (*)_{k+1}$.

\begin{proof}[Proof of Theorem \ref{thm119}]
We first show the ``if" part. Let us regard the fiber as a regular Seifert surface and consider where the situation $S_0$ is $ A_{F,W}$. By the fibered assumption, there is a group isomorphism $\phi: F_{2g} \ra F_{2g}$ such that the word $z_i$ may be $x_i$ and $ y_i=\phi(x_i)$. Since $ \partial z_i / \partial x_j= \delta_{ij}$, $S_0$ satisfies $(*)_0$. Therefore, from the inductive discussion above, we can determine that  $S_{2g}$ is a diagonal matrix satisfying $ (*)_{2g}$. Since every diagonal entry of $S_{2g}$ lies in $W(\mathcal{A}, \kappa ) $, $S_{2g}$ is invertible and $A_{F,W} $ is also invertible as required.

Let us show the converse. Let $G$ and $ f$ in Example \ref{exppp0} be $ \pi_1(S^3 \setminus K)$ and $\mathrm{id}_{G}$, respectively, giving us $\rho: \Z [\pi_1(S^3 \setminus K) ] \ra \mathcal{A}_{\kappa}( \!( \tau) \!)$. By the exact sequence \eqref{jjj5655}, the invertibility of $A_{F,W}$ deduces the vanishing of the associated homology $H_1( S^3 \setminus K; \mathcal{A}_{\kappa}( \!( \tau) \!))$. If $H_1( S^3 \setminus K; \mathcal{A}_{\kappa}( \!( \tau) \!))=0$, then $ K$ is fibered, as shown in \cite[Theorem 1.7]{Fri}, thus completing the proof.
\end{proof}
\begin{proof}[Proof of Theorem \ref{ww2}]
Let $b_{ii}$ denote the $(i,i)$-th entry of $S_{2g} $ in the previous proof. By $(*)_{2g}$, $b_{ii}$ lies in $W(\mathcal{A}, \kappa ) $. In the $K_1$ group, $[A_{F,W} ]$ is diagonal and is equal to $ b_{11} \cdots b_{2g,2g} \in W_1( \mathcal{A})$ as desired.
\end{proof}

\normalsize
DEPARTMENT OF
MATHEMATICS
TOKYO
INSTITUTE OF
TECHNOLOGY
2-12-1
OOKAYAMA
, MEGURO-KU TOKYO
152-8551 JAPAN

\end{document}